\documentclass[11pt, a4paper]{article}
\usepackage{amsfonts,amsmath,amssymb,latexsym,color,mathrsfs,tikz,breqn,extarrows}
\usepackage{amsthm}
\newtheorem{thm}{Theorem}[section]
\newtheorem{lem}[thm]{Lemma}
\newtheorem{cor}[thm]{Corollary}
\newtheorem{prop}[thm]{Proposition}
\newtheorem{rem}[thm]{Remark}

\usetikzlibrary{calc}

\oddsidemargin=0.3in
\evensidemargin=0.3in
\topmargin=-0.2in
\textwidth=15.4cm
\textheight=23cm

\usepackage{authblk}

\long\def\delete#1{}

\usepackage{cleveref}
\crefformat{section}{\S#2#1#3}
\crefformat{subsection}{\S#2#1#3}
\crefformat{subsubsection}{\S#2#1#3}
\crefrangeformat{section}{\S\S#3#1#4 to~#5#2#6}
\crefmultiformat{section}{\S\S#2#1#3}{ and~#2#1#3}{, #2#1#3}{ and~#2#1#3}

\def\QEDopen{{\hfill\setlength{\fboxsep}{0pt}\setlength{\fboxrule}{0.2pt}\fbox{\rule[0pt]{0pt}{1.3ex}\rule[0pt]{1.3ex}{0pt}}}}

 \newcommand{\BZ}{{\mathbb {Z}}}

\newcommand{\sA}{\mathscr{A}}\newcommand{\sB}{\mathscr{B}}
\newcommand{\sD}{\mathscr{D}}
\newcommand{\sF}{\mathscr{F}}

\newcommand{\sN}{\mathscr{N}}
\newcommand{\sP}{\mathscr{P}}

\newcommand{\sU}{\mathscr{U}}

\newcommand{\floor}[1]{\left\lfloor #1 \right\rfloor}
\newcommand{\ceil}[1]{\left\lceil #1 \right\rceil}

\newcommand{\abs}[1]{\left | #1 \right |}

\newcommand{\ls}{\leqslant}
\newcommand{\gs}{\geqslant}
\newcommand{\cb}[1]{\left\{ #1 \right\}}
\newcommand{\pt}[1]{\left(#1\right)}

\begin{document}
\setcounter{page}{1}

\title{\bf Erd\H{o}s-Ko-Rado Theorem for Bounded Multisets}

\author[1]{
Jiaqi Liao\thanks{E-mail:\texttt{liaojq19@mails.tsinghua.edu.cn}}}
\author[1]{
Zequn Lv\thanks{E-mail:\texttt{lvzq19@mails.tsinghua.edu.cn}}}
\author[2]{
Mengyu Cao\thanks{E-mail:\texttt{myucao@ruc.edu.cn}}}
\author[1]{
Mei Lu\thanks{E-mail:\texttt{lumei@tsinghua.edu.cn}}}

\affil[1]{\small Department of Mathematical Sciences, Tsinghua University, Beijing 100084, China}
\affil[2]{\small Institute for Mathematical Sciences, Renmin University of China, Beijing 100086, China}

\date{}
\openup 0.5\jot
\maketitle

\begin{abstract}
Let $ k, m, n $ be positive integers with $ k \gs 2 $. A $ k $-multiset of $ [n]_m $ is a collection of $ k $ integers from the set $ \cb{1, 2, \ldots, n} $ in which the integers can appear more than once but at most $ m $ times. A family of such $ k $-multisets is called an intersecting family if every pair of $ k $-multisets from the family have non-empty intersection. A finite sequence of real numbers $\{a_1,a_2,\ldots,a_n\}$ is said to be unimodal if there is some $k\in \{1,2,\ldots,n\}$,
 such that $a_1\ls a_2\ls\ldots\ls a_{k-1}\ls a_k\gs a_{k+1}\gs \ldots\gs a_n$. Given $m,n,k$, denote $C_{k,l}$ as the coefficient of $x^k$ in the generating function $(\sum_{i=1}^mx^i)^l$, where $1\ls l\ls n$. In this paper, we first show that the sequence of $\{C_{k,1},C_{k,2},\ldots,C_{k,n}\}$ is unimodal. Then we use this as a tool to prove
  that the intersecting family in which every $ k $-multiset contains a fixed element attains the maximum cardinality for $ n \gs k + \ceil{k/m} $. In the special case when $m = 1$ and $m=\infty$, our result gives rise to the famous Erd\H{o}s-Ko-Rado Theorem and an unbounded multiset version for this problem given by Meagher and Purdy \cite{MP11}, respectively. The main result in this paper can be viewed as a bounded multiset version of the Erd\H{o}s-Ko-Rado Theorem.
	
\vspace{2mm}
	
\noindent{\bf Key words} Multiset, Erd\H{o}s-Ko-Rado Theorem, Unimodality\ \
	
\
	
\noindent{\bf MSC2010:} \   05C35, 05D05, 05A15	
\end{abstract}

\section{Introduction}

\subsection{Notations}

Let $ m, n $ be positive integers ($ m = \infty $ is allowed). Denote  $ [n]_m := \cb{m \cdot 1, m \cdot 2, \ldots, m \cdot n} $, that is, $ [n]_m $ contains exactly $ m $ symbols $ i $ for each $ i = 1, 2, \ldots, n $. Then $[n]_1=[n]:= \cb{1, 2, \ldots, n} $. When $ m < \infty $, $ [n]_m $ is called \emph{\textbf{bounded}}, and  $ [n]_\infty $ is called \emph{\textbf{unbounded}}.
Let $ m_i $ be  integer, where $1\ls i\ls n$. We call $ \{m_1 \cdot 1,$ $ m_2 \cdot 2, \ldots, m_n \cdot n\} $ a \emph{\textbf{multiset}} of $ [n]_m $ if $0\ls m_i\ls m$ for all $1\ls i\ls n$. Note that  the cardinality of a multiset
is the total number of elements including repetitions.

 Let $ k $ be a positive integer. Denote $ \binom{[n]_m}{k} := \cb{A \subseteq [n]_m: \abs{A} = k} $. Let $ t $ be a positive integer and $ s $ a non-negative integer with $ s \ls k - t $, denote

\[\sF_{s, t}^{(m)} := \cb{A \in \binom{[n]_m}{k}: \abs{A \cap [2s + t]} \gs s + t}.\]

\noindent A family $ \sA \subseteq \binom{[n]_m}{k} $ is called  \textbf{\emph{$ t $-intersecting}} if for any $ A_1, A_2 \in \sA $, we have $ \abs{A_1 \cap A_2} \gs t $. For simplicity, ``intersecting" means ``$ 1 $-intersecting". Two families $ \sA_1, \sA_2 \subseteq \binom{[n]_m}{k} $ are called \textbf{\emph{isomorphic}} if there is a permutation $ \sigma $ on $ [n] $ such that $ \sA_2 = \sigma(\sA_1) := \cb{\sigma(A): A \in \sA_1} $ and denoted by $ \sA_1 \cong \sA_2 $. An intersecting family $ \sA \subseteq \binom{[n]_m}{k} $ is called \emph{\textbf{trivial}} if $ \sA \subseteq \sF_{0, 1}^{(m)} $ up to isomorphism, and \emph{\textbf{non-trivial}} otherwise.

\subsection{Background}

One of the basic problems in extremal set theory is to decide how large these intersecting families can be, and to describe the structure of the intersecting families that meet whatever bound we can derive.

\subsubsection{Sets}

The famous Erd\H{o}s-Ko-Rado Theorem \cite{EKR61} is the first result in extremal set theory that gives the size and structure of the largest intersecting family in $ \binom{[n]}{k} $.

\begin{thm}[Erd\H{o}s, Ko and Rado \cite{EKR61}]\label{EKR_Theorem}

Let $ k, n $ be positive integers with $ k \gs 2 $ and $ n \gs 2k $. If $ \sA \subseteq \binom{[n]}{k} $ is an intersecting family, then $ \abs{\sA} \ls \abs{\sF_{0, 1}^{(1)}} $. Moreover, when $ n > 2k $, equality holds if and only if $ \sA \cong \sF_{0, 1}^{(1)} $.

\end{thm}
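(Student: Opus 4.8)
The plan is to use Katona's cyclic permutation (circle) method, which yields the bound $\abs{\sA} \ls \binom{n-1}{k-1} = \abs{\sF_{0,1}^{(1)}}$ almost immediately through a double count. First I would fix a cyclic ordering $\pi$ of $[n]$ (place $1, \ldots, n$ around a circle) and call a $k$-set an \emph{arc} of $\pi$ if its elements occupy $k$ consecutive positions. The crucial local lemma is: for $n \gs 2k$, at most $k$ arcs of a single cyclic ordering can be pairwise intersecting. To see this, fix one arc $A \in \sA$; every other arc of $\pi$ meeting $A$ starts at one of the $2(k-1)$ positions flanking $A$ on either side, and these $2(k-1)$ arcs split into $k-1$ pairs such that the two arcs in each pair are disjoint (this is exactly where $n \gs 2k$ enters). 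Hence at most one arc from each pair lies in the intersecting family $\sA$, giving at most $1 + (k-1) = k$ arcs of $\pi$ inside $\sA$.

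The second step is the double count over all $(n-1)!$ cyclic orderings. By the local lemma, the number of incident pairs $(\pi, A)$ with $A \in \sA$ an arc of $\pi$ is at most $k \cdot (n-1)!$. On the other hand, a fixed $k$-set is an arc in exactly $k!\,(n-k)!$ cyclic orderings (arrange the $k$ elements inside the block in $k!$ ways, then arrange the resulting $n-k+1$ units cyclically in $(n-k)!$ ways), and this count is independent of the chosen set. Therefore $\abs{\sA} \cdot k!\,(n-k)! \ls k \cdot (n-1)!$, which rearranges to $\abs{\sA} \ls \frac{(n-1)!}{(k-1)!\,(n-k)!} = \binom{n-1}{k-1}$, matching $\abs{\sF_{0,1}^{(1)}}$.

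For the uniqueness statement when $n > 2k$, the hard part is upgrading the local lemma to a global structural conclusion. The key observation is that when the inequality is strict, equality in the arc lemma forces the $k$ selected arcs to be \emph{nested through a common position}, i.e.\ to be precisely the $k$ consecutive arcs all containing one fixed point of the circle. I would then argue that $\abs{\sA} = \binom{n-1}{k-1}$ compels equality for (almost) every cyclic ordering, and a consistency argument — comparing the common points forced by orderings that differ by a single transposition — propagates the shared point across all orderings, forcing every member of $\sA$ to contain one common element, whence $\sA \cong \sF_{0,1}^{(1)}$. I expect this propagation step to be the main obstacle: the cardinality bound itself is clean, but pinning down the global star structure from the local equality cases requires care. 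An alternative that sidesteps part of this difficulty is the shifting (compression) approach, where one checks that the usual $(i,j)$-compressions preserve the intersecting property and do not decrease $\abs{\sA}$, reducing to a left-compressed family whose structure is transparent; there, however, tracking exactly which families remain extremal under compression is itself delicate and would form the crux of the uniqueness argument.
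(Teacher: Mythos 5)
This statement is Theorem~\ref{EKR_Theorem}, which the paper quotes from \cite{EKR61} and does not prove, so there is no in-paper argument to compare against; your proposal has to stand on its own. The first half does: the Katona circle argument is correct and essentially complete. The local lemma (at most $k$ pairwise intersecting arcs of a fixed cyclic ordering when $n \gs 2k$) is stated with slightly loose wording --- the $2(k-1)$ relevant arcs are those having an endpoint at one of the $k-1$ internal boundaries of the fixed arc $A$, not arcs ``starting at flanking positions'' --- but the pairing into $k-1$ disjoint pairs and the count $1+(k-1)=k$ are the standard, correct argument, and the double count $\abs{\sA}\cdot k!\,(n-k)! \ls k\cdot(n-1)!$ yields $\abs{\sA}\ls\binom{n-1}{k-1}=\abs{\sF_{0,1}^{(1)}}$ exactly as claimed.

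The uniqueness half, however, is a plan rather than a proof, and you acknowledge as much. Two things are missing. First, the local equality statement --- that for $n>2k$ a family of exactly $k$ pairwise intersecting arcs must consist of the $k$ arcs through a common point of the circle --- is asserted but not proved; it is true, but it requires an argument (choosing one arc from each disjoint pair and then using pairwise intersection among the chosen arcs to pin down the common point). Second, and more seriously, the propagation step is left entirely open: from $\abs{\sA}=\binom{n-1}{k-1}$ you get equality in the arc lemma for \emph{every} cyclic ordering, hence a distinguished point $x_\pi$ for each $\pi$, and you must show all these points can be taken to be the same element of $[n]$ and that every member of $\sA$ contains it. Your suggestion of comparing orderings differing by a transposition is the right kind of idea, but as written there is no argument that the forced common points agree, nor that a set of $\sA$ which is an arc of $\pi$ through $x_\pi$ in one ordering cannot avoid the common element forced elsewhere. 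Until that consistency argument is carried out (or you switch fully to the compression route and actually track the extremal families through the compressions), the characterization of equality for $n>2k$ is a genuine gap.
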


A second theorem  in \cite{EKR61}  showed that each extremal $t$-intersecting family of ${[n]\choose k}$ consists of all $k$-subsets that contain a fixed $t$-subset of $[n]$ for $n>n_0(k,t)$. It is known that the smallest possible such function $n_0(k, t)$ is $(t+1)(k-t+1).$ This was proved by Frankl \cite{Frankl-1978} for $t\geq 15$ and subsequently determined by Wilson \cite{Wilson-1984} for all $t$. In \cite{Frankl-1978}, Frankl also made a conjecture on the maximum size of a $t$-intersecting family of ${[n]\choose k}$ for all positive integers $t,k$ and $n$. This conjecture was partially proved by Frankl and F\"{u}redi in  \cite{Frankl--Furedi-1991} and completely settled by Ahlswede and Khachatrian in \cite{AK97}. 

\begin{thm}[Ahlswede and Khachatrian \cite{AK97}]\label{AK_Theorem}

Let $ k, n, t $ be positive integers with $ n \gs k \gs \max\{2, t\} $ and $ s $ a non-negative integer with $ s \ls k - t $.

\begin{itemize}
	\item [\rm(a)] If $ (k - t + 1)\pt{2 + \frac{t - 1}{s + 1}} < n < (k - t + 1)\pt{2 + \frac{t - 1}{s}} $, then $ \sF_{s, t}^{(1)} $ is the unique (up to isomorphism) $ t $-intersecting family in $ \binom{[n]}{k} $ with maximum cardinality. $($By convention, $ \frac{t - 1}{s} = \infty $ for $ s = 0 $.$)$
	
	\item [\rm(b)] If $ n = (k - t + 1)\pt{2 + \frac{t - 1}{s + 1}} $, then $ \sF_{s, t}^{(1)}, \sF_{s + 1, t}^{(1)} $ are the unique $($up to isomorphism$)$ $ t $-intersecting families in $ \binom{[n]}{k} $ with maximum cardinality.
\end{itemize}

\end{thm}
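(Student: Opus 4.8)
The plan is to attack the problem through the compression-and-structure program that grew out of Frankl's conjecture, of which this theorem is the resolution. The first step is to reduce to \emph{left-compressed} (shifted) families. For $ i < j $ define the shift $ S_{ij} $ that replaces each $ A \in \sA $ with $ j \in A $, $ i \notin A $ by $ (A \setminus \cb{j}) \cup \cb{i} $ whenever the latter set is not already in $ \sA $. A routine verification shows that $ S_{ij} $ preserves $ \abs{\sA} $ and sends $ t $-intersecting families to $ t $-intersecting families, and that iterating the shifts over all pairs terminates at a family fixed by every $ S_{ij} $. Hence it suffices to prove the cardinality bound for shifted families; the $ t = 1 $, $ s = 0 $ instance then recovers Theorem~\ref{EKR_Theorem}, while the general case requires the finer analysis below.

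For a shifted $ t $-intersecting family I would study its \emph{generating set}, i.e.\ the inclusion-minimal members after shifting, and record how each member meets the initial segments $ [t + 2s] $. Shiftedness forces rigid links between the quantities $ \abs{A \cap [t + 2s]} $ across members, which confines the generating set to a short list of candidate shapes. The technical engine is the Ahlswede--Khachatrian \emph{pushing-pulling} method: starting from a shifted family that is not already some canonical $ \sF_{s, t}^{(1)} $, one performs local moves that add and delete generators in a balanced way, each move leaving the family $ t $-intersecting and never decreasing the size of the family it generates, and driving it toward a canonical $ \sF_{s, t}^{(1)} $. This reduces the entire extremal problem to the finite comparison of the numbers $ \abs{\sF_{s, t}^{(1)}} $ over the range $ 0 \ls s \ls k - t $.

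It then remains to optimize over $ s $. Writing $ \abs{\sF_{s, t}^{(1)}} $ as an explicit sum of products of binomial coefficients and examining the consecutive difference $ \abs{\sF_{s + 1, t}^{(1)}} - \abs{\sF_{s, t}^{(1)}} $ shows, for fixed $ k, t, n $, that this difference is positive for small $ s $ and negative for large $ s $, switching sign precisely as $ n $ crosses the threshold $ (k - t + 1)\pt{2 + \tfrac{t - 1}{s + 1}} $. When $ n $ lies strictly between two consecutive thresholds a single value of $ s $ is the strict maximizer, yielding case~(a); when $ n $ equals a threshold the two neighbors $ \sF_{s, t}^{(1)} $ and $ \sF_{s + 1, t}^{(1)} $ tie, yielding case~(b).

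The hard part is the pushing-pulling step: proving that the balanced local moves genuinely never decrease the cardinality of the generated family demands a delicate count of how many $ k $-sets are picked up versus dropped at each move, together with careful bookkeeping to preserve the $ t $-intersecting property throughout --- this monotonicity is exactly the combinatorial core that had resisted proof before \cite{AK97}. A further layer of care is needed for uniqueness: one must show that \emph{every} maximum family, after compression, is forced into canonical form and then trace the shifts backward to conclude the isomorphism statements (and the genuine tie in case~(b)), rather than merely exhibiting one maximum family of each type.
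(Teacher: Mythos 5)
This theorem is quoted by the paper from Ahlswede--Khachatrian \cite{AK97} with no proof given, so there is no internal argument to compare you against; what you have written has to stand on its own. As a description of the architecture of the published proof it is accurate: reduction to left-compressed families, analysis of generating sets of shifted families, the pushing--pulling exchange argument, and a final optimization of $\abs{\sF_{s,t}^{(1)}}$ over $s$ with the sign of the consecutive difference changing at $n=(k-t+1)\pt{2+\frac{t-1}{s+1}}$ is indeed the route taken in \cite{AK97}.

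But as a proof it has a genuine gap, and you have located it yourself: the entire content of the theorem lives in the claim that the balanced local moves on the generating set never decrease the cardinality of the generated family while preserving the $t$-intersecting property, and your proposal asserts this rather than establishing it. That monotonicity is not a ``delicate count'' one can wave at --- it occupies the bulk of the original paper and is precisely what blocked Frankl's conjecture for twenty years; nothing in your outline indicates how the count would go or why the exchanges can always be organized so that gains dominate losses. Two smaller points: (i) the remark that the $t=1$, $s=0$ instance ``then recovers Theorem~\ref{EKR_Theorem}'' after merely reducing to shifted families is premature --- shiftedness alone does not prove EKR, you still need an argument (induction on $n$, or Katona's cycle method) for shifted families; (ii) the uniqueness statements in (a) and (b) cannot be obtained just by ``tracing the shifts backward,'' since compression is not injective on isomorphism classes --- recovering the structure of an arbitrary (uncompressed) maximum family from that of its compression requires a separate stability argument, which your sketch names but does not supply. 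So the proposal is a correct map of the known proof, not a proof.
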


\subsubsection{Unbounded multisets}

In this paper, we focus on intersection problems for multisets. Meagher and Purdy were the first authors to give an unbounded multiset version of the Erd\H{o}s-Ko-Rado Theorem by using the graph homomorphism method \cite{MP11}.

\begin{thm}[Meagher and Purdy \cite{MP11}]\label{MP_Theorem}
	
Let $ k, n $ be positive integers with $ k \gs 2 $ and $ n \gs k + 1 $. If $ \sA \subseteq \binom{[n]_\infty}{k} $ is an intersecting family, then $ \abs{\sA} \ls \abs{\sF_{0, 1}^{(\infty)}} $. Moreover, when $ n > k + 1 $, equality holds if and only if $ \sA \cong \sF_{0, 1}^{(\infty)} $.

\end{thm}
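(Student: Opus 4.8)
The plan is to reduce this multiset problem to a weighted intersecting-family problem for ordinary sets. The crucial observation is that two $k$-multisets of $[n]_\infty$ meet if and only if their \emph{supports} (the sets of symbols occurring with positive multiplicity) meet, since a common element of two multisets is exactly a common symbol. Consequently, if $\sA$ is intersecting then the collection of supports $\CS := \cb{\mathrm{supp}(A) : A \in \sA}$ is an intersecting family of nonempty subsets of $[n]$, each of size at most $k$; and since the intersection relation depends only on supports, enlarging $\sA$ so that it contains \emph{every} $k$-multiset whose support lies in $\CS$ keeps it intersecting and can only increase $\abs{\sA}$. The number of $k$-multisets with a prescribed support of size $\ell$ is the number of compositions of $k$ into $\ell$ positive parts, namely $\binom{k-1}{\ell-1}$, which is precisely $C_{k,\ell}$ in the case $m=\infty$. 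Thus
\[ \abs{\sA} \ls \sum_{S \in \CS} \binom{k-1}{\abs{S}-1}, \]
and the problem becomes: over all intersecting families $\CS \subseteq \cb{S \subseteq [n] : 1 \ls \abs{S} \ls k}$, maximise the weighted sum on the right. A short Vandermonde computation shows that the fixed-element star $\CS_0 = \cb{S : 1 \in S,\ \abs{S} \ls k}$ yields $\sum_{\ell=1}^{k}\binom{n-1}{\ell-1}\binom{k-1}{\ell-1} = \binom{n+k-2}{k-1} = \abs{\sF_{0,1}^{(\infty)}}$, so it suffices to prove that $\CS_0$ is optimal.

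Next I would symmetrise $\CS$ by Frankl's $(i,j)$-shifts. Each shift replaces a symbol $j$ by a smaller symbol $i$ inside a support; it preserves the intersecting property and, being size-preserving, leaves the number of supports of each size — and hence the whole weighted sum $\sum_{S}\binom{k-1}{\abs{S}-1}$ — unchanged. Iterating until no shift applies, I may assume $\CS$ is left-compressed, which keeps the subfamilies appearing below structurally well-behaved.

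The heart of the argument is then an induction on $n \gs k+1$. Splitting $\CS$ at the top symbol $n$ into $\CA := \cb{S \in \CS : n \notin S}$ and $\CB := \cb{S \setminus \cb{n} : n \in S \in \CS}$ (both families in $[n-1]$, with $\CA$ intersecting and $(\CA,\CB)$ cross-intersecting — both facts immediate from intersectingness), the weighted sum splits as $\sum_{S \in \CA}\binom{k-1}{\abs{S}-1} + \sum_{T \in \CB}\binom{k-1}{\abs{T}}$, where the two level-weight sequences satisfy $\binom{n-1}{\ell-1}=\binom{n-2}{\ell-1}+\binom{n-2}{\ell-2}$, matching the star value in $[n-1]$ for each piece. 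The estimate thus reduces to a \emph{weighted cross-intersecting inequality} on $[n-1]$ for the two weight sequences $\binom{k-1}{\ell-1}=C_{k,\ell}$ and $\binom{k-1}{\ell}$, and to make the induction close I would actually prove this cross-intersecting strengthening directly. I expect this inequality to be the main obstacle: one must show that a cross-intersecting pair cannot outperform a common star, and here the unimodality (indeed the log-concavity) of $\ell \mapsto \binom{k-1}{\ell-1}=C_{k,\ell}$ established earlier is exactly the tool that controls the trade-off between using many small supports and many large ones. The hypothesis $n \gs k+1$ enters precisely in guaranteeing the splitting and the validity of this exchange.

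Finally, tracking equality through the induction and back through the shifts gives the characterisation: equality forces $\CA$ and $\CB$ to be the star of a common element with the cross-intersecting bound tight, so $\CS$ is a full star and $\sA \cong \sF_{0,1}^{(\infty)}$. The base case $n = k+1$ is where a second extremal configuration can arise, which is exactly why uniqueness is asserted only for $n > k+1$.
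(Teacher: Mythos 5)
Your reduction to a weighted problem on supports is correct and is exactly how the paper begins: two multisets of $[n]_\infty$ meet iff their supports meet, a support of size $\ell$ carries weight $\binom{k-1}{\ell-1}=C_{k,\ell}$, and the star evaluates to $\binom{n+k-2}{k-1}$ by Vandermonde. But from that point on your argument has a genuine gap: the entire difficulty of the theorem is concentrated in the ``weighted cross-intersecting inequality'' that you name, defer, and never prove. Saying that the unimodality of $\ell\mapsto\binom{k-1}{\ell-1}$ ``is exactly the tool that controls the trade-off'' is not an argument; for mixed-size families with level-dependent weights, the Frankl shift-and-split induction does not close routinely (one must also show, for instance, that the restricted family $\cb{S\setminus\cb{n}: n\in S\in\CS}$ interacts correctly with the shifted structure, and the weight sequences $\binom{k-1}{\ell-1}$ and $\binom{k-1}{\ell}$ on the two halves are not the ones for which standard cross-intersecting bounds are stated). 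The equality characterisation, which in this weighted setting is delicate precisely because the weights at the extreme levels $\ell=1$ and $\ell=k$ are both equal to $1$, is likewise only asserted. So as written this is a plan whose central lemma is missing, not a proof.

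For comparison, the paper avoids compression and induction on $n$ altogether. It extends the support family to a \emph{maximal} intersecting family $\sB\subseteq\sP([n])$, for which exactly one of $B,B^c$ belongs to $\sB$ for every $B$; writing $\sD=\sU\setminus\sB$ where $\sU$ is the star of $1$, this complementation gives $\abs{\sB(\ell)}=\abs{\sU(\ell)}-\abs{\sD(\ell)}+\abs{\sD(n-\ell)}$, and the classical Erd\H{o}s--Ko--Rado Theorem applied level by level yields $\abs{\sD(\ell)}\gs\abs{\sD(n-\ell)}$ for $\ell\ls\floor{\frac{n-1}{2}}$. The weighted count then differs from the star by $\sum_{\ell}\pt{C_{k,\ell}-C_{k,n-\ell}}\pt{\abs{\sD(\ell)}-\abs{\sD(n-\ell)}}$, and the only inequality needed on the weights is $C_{k,\ell}\gs C_{k,n-\ell}$, which for $m=\infty$ is immediate from the symmetry and unimodality of binomial coefficients once $n\gs k+1$. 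If you want to salvage your route, you must either prove your cross-intersecting lemma in full or switch to this complement-pairing argument, which trades the inductive machinery for a single appeal to EKR at each level.
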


The following theorem is an unbounded multiset version of the complete Erd\H{o}s-Ko-Rado Theorem. In \cite{FGV15}, F\"{u}redi, Gerbner and Vizer gave the size of the largest $t$-intersecting families of $k$-multisets from $[n]_{\infty}$ using an operation which they call down-compression. In \cite{MP16}, Meagher and Purdy used the down-compression operation from \cite{FGV15} and a graph homomorphism to prove the further results and gave the structure of the families that attain the maximum size.

\begin{thm}[F\"{u}redi, Gerbner and Vizer \cite{FGV15}; Meagher and Purdy \cite{MP16}]\label{FGVMP_Theorem}
	
Let $ k, n, t $ be positive integers with $ k \gs \max\{2, t\} $, $ n \gs 2k - t $ and $ s $ a non-negative integer with $ s \ls k - t $. Let $ \sA \subseteq \binom{[n]_\infty}{k} $ be an $t$-intersecting family.

\begin{itemize}
\item [\rm(a)] If $ (k - t + 1)\pt{2 + \frac{t - 1}{s + 1}} < n + k - 1 < (k - t + 1)\pt{2 + \frac{t - 1}{s}} $, then $ \abs{\sA} \ls \abs{\sF_{s, t}^{(\infty)}} $. (By convention, $ \frac{t - 1}{s} = \infty $ for $ s = 0 $.) If $ s > 0 $, equality holds if and only if $ \sA \cong \sF_{s, t}^{(\infty)} $. If $ s = 0 $, equality holds if and only if $ \sA $ consists of all $ k $-multisets containing a fixed $ t $-multiset.
	
\item [\rm(b)] If $ n + k - 1 = (k - t + 1)\pt{2 + \frac{t - 1}{s + 1}} $, then $ \abs{\sA} \ls \abs{\sF_{s, t}^{(\infty)}} = \abs{\sF_{s + 1, t}^{(\infty)}} $. If $ s > 0 $, equality holds if and only if $ \sA \cong \sF_{s, t}^{(\infty)} $ or $ \sF_{s + 1, t}^{(\infty)} $. If $ s = 0 $, equality holds if and only if $ \sA \cong \sF_{1, t}^{(\infty)} $ or it consists of all $ k $-multisets containing a fixed $ t $-multiset.
\end{itemize}	

\end{thm}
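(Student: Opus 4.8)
The plan is to reduce the unbounded multiset problem to the classical set theorem of Ahlswede and Khachatrian (Theorem~\ref{AK_Theorem}) by means of a structure-preserving encoding, and to use down-compression to force the extremal family into a normal form on which the reduction becomes exact. First I would encode each $k$-multiset $A\in\binom{[n]_\infty}{k}$ by its multiplicity vector $(a_1,\ldots,a_n)\in\BZ_{\gs 0}^n$ with $\sum_{i=1}^n a_i=k$, so that the $t$-intersection condition reads $\sum_{i=1}^n\min(a_i,b_i)\gs t$. The crucial observation is that the map $A\mapsto S(A):=\cb{(i,\ell):1\ls\ell\ls a_i}\subseteq[n]\times[k]$ is injective and satisfies $\abs{S(A)\cap S(B)}=\sum_i\min(a_i,b_i)$; hence the multiset $\min$-intersection becomes an honest set intersection, and $\sA$ being $t$-intersecting is equivalent to $\cb{S(A):A\in\sA}$ being a $t$-intersecting family of $k$-subsets of $[n]\times[k]$, albeit one restricted to the ``staircase'' shapes that are downward closed in each column. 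Recording instead the partial sums gives the standard bijection between $\binom{[n]_\infty}{k}$ and $\binom{[n+k-1]}{k}$, which already explains why the Ahlswede--Khachatrian thresholds appear with $n$ replaced by $n+k-1$.

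Next I would introduce the down-compression operators $D_{ij}$ for $i<j$, which shift one unit of multiplicity from coordinate $j$ to coordinate $i$ whenever doing so does not create a collision inside the family. Following F\"{u}redi, Gerbner and Vizer, one checks that each $D_{ij}$ preserves cardinality and the $t$-intersecting property, so we may assume $\sA$ is stable under all $D_{ij}$; such a family is concentrated on the small indices and its staircase sets $S(A)$ are pushed as far down and to the left as possible. On a fully compressed family the restriction to staircase shapes becomes inessential, and the extremal count coincides with that of a genuine $t$-intersecting family of $k$-sets; invoking Theorem~\ref{AK_Theorem} after the shift $n\mapsto n+k-1$ then yields the bound $\abs{\sA}\ls\abs{\sF_{s,t}^{(\infty)}}$ throughout the stated ranges of $n$, with cases (a) and (b) corresponding exactly to the strict-inequality and boundary cases of the set theorem. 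A separate computation of $\abs{\sF_{s,t}^{(\infty)}}$ as a sum of multiset coefficients confirms the equality $\abs{\sF_{s,t}^{(\infty)}}=\abs{\sF_{s+1,t}^{(\infty)}}$ at the boundary in part (b).

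For the characterization of equality I would run the compression in reverse: tracing which families survive all $D_{ij}$ without strict loss shows that an extremal family must already be, up to isomorphism, one of the canonical families $\sF_{s,t}^{(\infty)}$ (for $s>0$) or, in the degenerate case $s=0$, the family of all $k$-multisets containing a fixed $t$-multiset. The base case $t=1$, $s=0$ is supplied directly by the Meagher--Purdy theorem (Theorem~\ref{MP_Theorem}), which both anchors the argument and gives the $s=0$ structural description; the graph-homomorphism method behind Theorem~\ref{MP_Theorem} can also be used to double-check uniqueness at the first threshold. The boundary case (b) is then handled by noting that the two AK-optimal set families pull back to $\sF_{s,t}^{(\infty)}$ and $\sF_{s+1,t}^{(\infty)}$, with the $s=0$ subcase refined via Theorem~\ref{MP_Theorem} to read off $\sF_{1,t}^{(\infty)}$ together with the fixed-$t$-multiset family.

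The main obstacle is the transfer of the intersection condition: the encoding $A\mapsto S(A)$ lands only in the staircase subfamily of $\binom{[n]\times[k]}{k}$, so Theorem~\ref{AK_Theorem} cannot be applied verbatim. The entire weight of the argument rests on showing that down-compression removes this restriction without changing the extremal value --- equivalently, that a compression-stable $t$-intersecting multiset family can be compared, size for size, with a $t$-intersecting set family in the shifted ground set. Making this comparison tight enough to yield not merely the bound but also the exact equality cases (especially the delicate boundary (b) and the degenerate $s=0$ descriptions) is where the real work lies.
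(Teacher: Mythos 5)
First, a point of order: the paper does not prove Theorem~\ref{FGVMP_Theorem}. It is stated purely as background and attributed to F\"{u}redi--Gerbner--Vizer \cite{FGV15} (the bound) and Meagher--Purdy \cite{MP16} (the structure of the extremal families), so there is no in-paper proof to compare yours against. What can be said is that your plan correctly reconstructs the strategy of those two sources: \cite{FGV15} does proceed via a down-compression on multiplicity vectors and a comparison with the Ahlswede--Khachatrian families on the shifted ground set $[n+k-1]$, and \cite{MP16} obtains the uniqueness statements by combining that compression with a graph-homomorphism argument. Your observation that $\abs{S(A)\cap S(B)}=\sum_i\min(a_i,b_i)$ is correct, and the thresholds in (a) and (b) are indeed the Ahlswede--Khachatrian thresholds with $n$ replaced by $n+k-1$.

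That said, as a proof the proposal has a genuine gap exactly where you locate it. The order-preserving bijection $\binom{[n]_\infty}{k}\to\binom{[n+k-1]}{k}$ does not carry the multiset $t$-intersection condition to set $t$-intersection in either direction, so Theorem~\ref{AK_Theorem} cannot simply be ``invoked after the shift''; and the assertion that ``on a fully compressed family the restriction to staircase shapes becomes inessential'' is precisely the main theorem of \cite{FGV15}, not a step one can wave through. Two specific things are missing: (i) a proof that the compression $D_{ij}$ preserves the $t$-intersecting property --- moving a unit of multiplicity can a priori decrease $\sum_i\min(a_i,b_i)$, and the standard collision case analysis must be redone in the multiset setting; and (ii) the actual size comparison between a compression-stable multiset family and a $t$-intersecting set family in $\binom{[n+k-1]}{k}$, which in \cite{FGV15} is an isodiametric-type argument rather than a direct transfer. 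The equality and uniqueness analysis (especially the $s=0$ and boundary cases in (b)) then requires the additional machinery of \cite{MP16} on top of that. In short, the proposal is a correct road map to the proof in the literature, but it is not yet a proof.
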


For more on intersection theorems of finite sets, see \cite{CLLW22, F78}. For more on extremal non-trivial intersecting families of finite sets, see \cite{AK96, HM67}. For more on intersection theorems in other categories, see \cite{GM16}.

\subsection{Main result}

In this paper, we present a bounded multiset version of the Erd\H{o}s-Ko-Rado Theorem.

\begin{thm}\label{main_thm}
Let $ k, m, n $ be positive integers with $ k \gs 2 $ and $ n \gs k + \ceil{k/m} $. If $ \sA \subseteq \binom{[n]_m}{k} $ is an intersecting family, then $ \abs{\sA} \ls \abs{\sF_{0, 1}^{(m)}} $. Moreover, when {\rm(a)} $ n > k + \ceil{k/m} $, or {\rm(b)} $ n = k + \ceil{k/m} $, $ k > m $ and $ m \nmid k $, equality holds if and only if $ \sA \cong \sF_{0, 1}^{(m)} $.
\end{thm}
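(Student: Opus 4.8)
The plan is to exploit the fact that for multisets the intersection relation is governed entirely by supports: two $k$-multisets $A,B$ satisfy $A\cap B\neq\emptyset$ if and only if their supports $\mathrm{supp}(A),\mathrm{supp}(B)\subseteq[n]$ meet. Hence I would stratify $\binom{[n]_m}{k}$ by support size, writing $\sA=\bigcup_{l}\sA^{(l)}$ where $\sA^{(l)}$ collects the members with exactly $l$ distinct elements. Within a single level the underlying composition of the multiplicities is irrelevant to intersection, so $\sA^{(l)}$ behaves like an intersecting family of $l$-subsets of $[n]$ in which each $l$-set is ``blown up'' into its $C_{k,l}$ admissible multiplicity patterns. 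Since $|\sF_{0,1}^{(m)}|=\sum_{l}\binom{n-1}{l-1}C_{k,l}$ (count the multisets containing element $1$ by their support size), the target reduces to the per-level estimate $|\sA^{(l)}|\ls\binom{n-1}{l-1}C_{k,l}$ together with a reconciliation of the levels where this fails.

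For the levels with $2l\ls n$ I would run Katona's cyclic-permutation method inside the level. Place $[n]$ on a circle; call a member of $\sA^{(l)}$ an \emph{arc} if its support is a set of $l$ cyclically consecutive points. In a fixed arrangement the supports of the arcs lying in $\sA^{(l)}$ are pairwise-intersecting length-$l$ arcs, of which there are at most $l$ when $n\gs 2l$, so at most $l\,C_{k,l}$ arcs of $\sA^{(l)}$ occur. Because every support of size $l$ is an arc in exactly $n\cdot l!\,(n-l)!$ of the $n!$ arrangements---a count independent of the composition---double counting gives $|\sA^{(l)}|\cdot n\,l!\,(n-l)!\ls n!\cdot l\,C_{k,l}$, i.e. $|\sA^{(l)}|\ls\binom{n-1}{l-1}C_{k,l}$, matching the contribution of the star exactly. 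This is the step where the uniform weight within a level is essential; across levels the weights $n\,l!\,(n-l)!$ vary, which is precisely why the cyclic method cannot be applied globally in one shot.

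The main obstacle is the high levels $2l>n$, which do occur in the relevant range since $l$ may be as large as $k$ while $n$ need only exceed $k+\ceil{k/m}$. Here any two supports of size $l$ already meet, so a whole level is intersecting and the per-level bound degenerates to the trivial $\binom{n}{l}C_{k,l}$; moreover any two members from two such high levels automatically intersect. Thus an intersecting family can trade low-level members for high-level ones, and I must show this trade is never profitable. The hypothesis $n\gs k+\ceil{k/m}$ enters exactly here: since every support has size at most $k$ and $\ceil{k/m}$ is the least possible support size, one always has $n-l\gs\ceil{k/m}$, so every multiset admits a disjoint partner and no high level can be taken in full without sacrificing elsewhere. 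To quantify the sacrifice I would left-compress $\sA$ by the usual shifting operators---moving multiplicity toward element $1$, which preserves intersection and pushes mass to lower support sizes---and then bound the surviving high-support members against the freed low-support capacity. The unimodality of $\{C_{k,1},\ldots,C_{k,n}\}$ is the tool that makes this accounting close: it controls how the blow-up factors $C_{k,l}$ vary with $l$ and forces the weighted comparison $\sum_{l}(\text{family contribution})\ls\sum_{l}\binom{n-1}{l-1}C_{k,l}=|\sF_{0,1}^{(m)}|$ to favour the star.

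Finally, for the equality and uniqueness statement I would track the equality conditions at each stage. In the cyclic step, when $n>2l$ Katona's bound is attained only by a ``star of arcs'', forcing $\sA^{(l)}$ to consist of all support-$l$ multisets through a single fixed element; matching these fixed elements across levels yields $\sA\cong\sF_{0,1}^{(m)}$. The extra hypotheses in case (b), namely $k>m$ and $m\nmid k$, are what guarantee a strict inequality in the high-level accounting at the boundary value $n=k+\ceil{k/m}$, ruling out competing extremal configurations: when $m\mid k$ the minimal level carries the single composition $C_{k,\ceil{k/m}}=1$, and when $k\ls m$ one has $\ceil{k/m}=1$ and $n=k+1$ (the Meagher--Purdy boundary), and in either case the inequality can be tight for families other than the star, which is why these cases are excluded from the uniqueness claim.
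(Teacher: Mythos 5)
Your reduction to support sizes and the per-level Katona bound $\abs{\sA^{(\ell)}} \ls \binom{n-1}{\ell-1}C_{k,\ell}$ for $2\ell \ls n$ are both correct, and you correctly identify the real difficulty: the levels with $2\ell > n$ (which do occur, since $\ell$ can be as large as $k$ while $n$ can be as small as $k+\ceil{k/m}$) are automatically intersecting, so summing per-level bounds overshoots, and one must show that filling a high level forces a compensating loss at a low level. But that is exactly the step your proposal does not carry out. ``Left-compress and then bound the surviving high-support members against the freed low-support capacity'' names the desired conclusion rather than proving it: you give no mechanism matching each high-level gain to a specific low-level loss, and unimodality of $\cb{C_{k,1},\ldots,C_{k,n}}$ by itself only compares the blow-up factors; it does not by itself make the trade unprofitable. (Compression for multisets is also delicate: a shift that ``moves multiplicity toward $1$'' changes support sizes, and you do not say what normal form it produces or how to conclude from it.)

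The paper closes this gap with a complement-pairing device that your outline is missing. It first passes to a maximal intersecting family $\sA$, observes that the family of supports extends to a maximal intersecting family $\sB_{\sA}$ in the full power set $\sP([n])$, for which exactly one of $B$, $B^c$ lies in $\sB_{\sA}$ for every proper nonempty $B$ (Lemma 3.1), and shows $\sA=\varphi^{-1}(\sB_{\sA})$ with $\abs{\sA}=\sum_{\ell}C_{k,\ell}\abs{\sB_{\sA}(\ell)}$. Writing $\sD=\sU\setminus\sB_{\sA}$, where $\sU$ is the star at $1$, the pairing yields the exact identity $\abs{\varphi^{-1}(\sU)}-\abs{\sA}=\sum_{\ell=q}^{\floor{(n-1)/2}}\pt{C_{k,\ell}-C_{k,n-\ell}}\pt{\abs{\sD(\ell)}-\abs{\sD(n-\ell)}}$, in which both factors are nonnegative: the second by the uniform Erd\H{o}s-Ko-Rado theorem at level $\ell$, and the first by $C_{k,\ell}\gs C_{k,n-\ell}$, which is where $n\gs k+\ceil{k/m}$ and the unimodality machinery (Propositions 2.2 and 2.6) actually enter. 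This identity also drives the uniqueness analysis: equality forces $C_{k,q}=C_{k,n-q}$, hence $n=k+q$ and $C_{k,q}=C_{k,k}=1$, which is excluded precisely when $k>m$ and $m\nmid k$ --- so your reading of why case (b) carries those hypotheses is right, but the inequality from which that conclusion is extracted is the pairing identity you have not established. To salvage your level-by-level plan you would need to add the complement pairing, or some equivalent injection from high-level gains to low-level losses; as written the argument does not close.
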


In the special case when $m = 1$ and $m=\infty$ in Theorem \ref{main_thm}, our result gives rise to Theorem 1.1 (the famous Erd\H{o}s-Ko-Rado Theorem \cite{EKR61}) and Theorem 1.3 (the main result in \cite{MP11} by Meagher and Purdy), respectively.

\begin{rem}
\label{rem1}
{\em
There is a vast, excellent literature on determining the extremal non-trivial $t$-intersecting families for finite sets. The first result was the Hilton-Milner Theorem \cite{HM67}. In \cite{MP16}, Meagher and Purdy gave  an unbounded multiset version of the Hilton-Milner Theorem. To limit the scope of this paper, we will not introduce them here. In another paper, we will study the bounded multiset version of Hilton-Milner Theorem.}
\end{rem}

A finite sequence of real numbers $\{a_1,a_2,\ldots,a_n\}$ is said to be \textbf{\emph{unimodal}} if there is some $k\in \{1,2,\ldots,n\}$, such that $a_1\le a_2\le\ldots\le a_{k-1}\le a_k\ge a_{k+1}\ge \ldots\ge a_n$. Unimodality problems often arise in many branches of mathematics and have been extensively investigated. Given $m,n,k$, denote $C_{k,l}$ as the coefficient of $x^k$ in the generating function $(\sum_{i=1}^mx^i)^l$, where $1\ls l\ls n$. In this paper, we first show that the sequence of $\{C_{k,1},C_{k,2},\ldots,C_{k,n}\}$ is unimodal in Section \ref{pre}. Then we use this as a tool to prove our main result.

This paper is organized as follows. In Section \ref{pre}, we prove two properties of $C_{k,l}$, which are the key in the proof of Theorem \ref{main_thm}. In Section \ref{proof}, we prove Theorem \ref{main_thm}.

\section{Preliminaries}\label{pre}
Let $ k, m, \ell $ be positive integers with $ m \gs 2 $. Recall that  $ C_{k, \ell} $ is the coefficient of $ x^k $ in the generating function $ \pt{\sum_{i = 1}^{m} x^i}^\ell $. Then $ \pt{\sum_{i = 1}^{m} x^i}^\ell = \sum_{k = 1}^{\infty} C_{k, \ell} \cdot x^k $. We omit $ ``m" $ in the notation since once $ m $ is given, it is fixed throughout the context. For later use, we always denote $ q := \ceil{k/m} $ throughout this paper. We collect some basic facts on $ C_{k, \ell} $ in the following lemma. For convenience, we set $C_{k,\ell}=0$ if $\ell\ls 0$.

\begin{lem}\label{easy_lemma}
$ C_{k, \ell} $ satisfies the following properties.
	
\begin{enumerate}		
\item\label{easy_property1} $ C_{k, \ell} \ne 0 $ if and only if $ q \ls \ell \ls k $.
		
\item\label{easy_property2} $ C_{k, q} = 1 $ if and only if {\rm(a)} $ q = 1 $, or {\rm(b)} $ q > 1 $ and $ m \mid k $.
		
\item\label{easy_property3} $ C_{k, k} \equiv 1 $.
		
\item\label{easy_property4} When $ q = 1 $, we have $ C_{k, \ell} = \binom{k - 1}{\ell - 1} $.

\item\label{easy_property5} $ C_{k, \ell} = C_{k - 1, \ell - 1} + C_{k - 2, \ell - 1} + \cdots + C_{k - m, \ell - 1} $ if $\ell\gs 2$.
\end{enumerate}	
\end{lem}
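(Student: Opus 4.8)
The plan is to reduce all five statements to a single combinatorial reading of $C_{k,\ell}$: since expanding $\pt{\sum_{i=1}^m x^i}^\ell$ and collecting the coefficient of $x^k$ amounts to choosing one exponent $a_j \in \cb{1,\ldots,m}$ from each of the $\ell$ factors, $C_{k,\ell}$ equals the number of compositions $k = a_1 + a_2 + \cdots + a_\ell$ with $1 \ls a_j \ls m$ for every $j$. With this interpretation in hand, each property becomes an elementary counting statement.

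First I would dispose of properties (\ref{easy_property1}), (\ref{easy_property3}), (\ref{easy_property4}) and (\ref{easy_property5}), which are immediate. For (\ref{easy_property5}), conditioning on the value $a_\ell = j$ of the last part (equivalently, factoring off one copy of $\sum_{i=1}^m x^i$) gives $C_{k,\ell} = \sum_{j=1}^m C_{k-j,\ell-1}$ at once, where $\ell \gs 2$ guarantees the remaining $\ell - 1 \gs 1$ parts still form a genuine composition. For (\ref{easy_property1}), the sum of $\ell$ parts each in $\cb{1,\ldots,m}$ lies between $\ell$ and $m\ell$, so a composition exists precisely when $\ell \ls k \ls m\ell$; the right inequality rearranges to $\ell \gs k/m$, i.e. $\ell \gs \ceil{k/m} = q$ since $\ell$ is an integer, giving $q \ls \ell \ls k$. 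Property (\ref{easy_property3}) is the top end of this range: $k = a_1 + \cdots + a_k$ with each $a_j \gs 1$ forces every $a_j = 1$, so $C_{k,k} = 1$. For (\ref{easy_property4}), when $q = 1$ we have $k \ls m$, so the constraint $a_j \ls m$ is automatically satisfied by any positive part, and the number of compositions of $k$ into $\ell$ positive parts is the classical stars-and-bars count $\binom{k-1}{\ell-1}$.

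The one step that needs a genuine idea is property (\ref{easy_property2}), which I would handle by a complementary substitution. Writing $e_j := m - a_j$, a composition of $k$ into $q$ parts in $\cb{1,\ldots,m}$ corresponds bijectively to a solution of $e_1 + \cdots + e_q = mq - k$ with $0 \ls e_j \ls m-1$. The crucial observation is that $q = \ceil{k/m}$ forces $0 \ls mq - k \ls m-1$, so the upper bounds $e_j \ls m-1$ are vacuous and the count collapses to $C_{k,q} = \binom{(mq-k) + q - 1}{q-1}$. A binomial coefficient $\binom{N}{r}$ with $0 \ls r \ls N$ equals $1$ exactly when $r = 0$ or $r = N$; here $r = q-1$ and $N = mq - k + q - 1$, so $C_{k,q} = 1$ iff $q = 1$ or $mq - k = 0$. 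Since $mq = k$ together with $q = \ceil{k/m}$ is equivalent to $m \mid k$, this yields precisely the dichotomy (a)/(b) in the statement.

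I expect the verification of the range $0 \ls mq - k \ls m-1$, and its consequence that the upper-bound constraints drop out, to be the only non-mechanical part; everything else follows directly from the compositions interpretation.
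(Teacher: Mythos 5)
Your proposal is correct and follows essentially the same route as the paper: both rest on the interpretation of $C_{k,\ell}$ as the number of compositions of $k$ into $\ell$ parts from $\cb{1,\ldots,m}$, with (\ref{easy_property5}) obtained by peeling off one factor of $\sum_{i=1}^m x^i$ and (\ref{easy_property4}) by stars and bars. The only difference is that for (\ref{easy_property2}) the paper merely asserts it follows from the combinatorial interpretation, whereas you supply a clean complementation argument ($e_j = m - a_j$, using $0 \ls mq - k \ls m-1$) that fills in that detail correctly.
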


\begin{proof}

Note that $ C_{k, \ell} $ has the following combinatorical interpretation.

\[C_{k, \ell} = \abs{\cb{(m_1, \ldots, m_\ell) \in [m]^\ell: m_1 + \cdots + m_\ell = k}}.\]

{\rm(\romannumeral1)} Note that the monomial of minimum degree in $ \pt{\sum_{i = 1}^{m} x^i}^\ell $ is $ x^\ell $, and the monomial of maximum degree in $ \pt{\sum_{i = 1}^{m} x^i}^\ell $ is $ x^{m\ell} $. According to the definition of $ C_{k, \ell} $, we have $ C_{k, \ell} \ne 0 $ if and only if $ \ell \ls k \ls m\ell $, which is equivalent to $ q \ls \ell \ls k $.

{\rm(\romannumeral2)} and {\rm(\romannumeral3)} follow from the combinatorical interpretation of $ C_{k, \ell} $.

{\rm(\romannumeral4)} Note that $ q = 1 $ means $ k \ls m $. By the combinatorical interpretation of $ C_{k, \ell} $, this case is equivalent to $ m = \infty $. Thus $ C_{k, \ell} = \binom{\ell + k - \ell - 1}{k - \ell} = \binom{k - 1}{\ell - 1} $.

{\rm(\romannumeral5)} On one hand, we have

\[\pt{\sum_{i = 1}^{m} x^i}^\ell = \sum_{k = \ell}^{m\ell} C_{k, \ell} \cdot x^k.\]

\noindent On the other hand, we have

\begin{align*}
\pt{\sum_{i = 1}^{m}x^i}^\ell
{}&{}={}\pt{\sum_{i = 1}^{m} x^i}^{\ell - 1} \times \pt{\sum_{i = 1}^{m} x^i} \\
{}&{}={}\pt{\sum_{k = \ell - 1}^{m(\ell - 1)}C_{k, \ell - 1} \cdot x^k} \times \pt{\sum_{i = 1}^{m} x^i} \\
{}&{}={}\sum_{k = \ell}^{m\ell} \pt{C_{k - 1, \ell - 1} + C_{k - 2, \ell - 1} + \cdots + C_{k - m, \ell - 1}} \cdot x^k. \qedhere
\end{align*}

\end{proof}

Let $S$ be a finite set consisting of  positive integers. We use $\min S$ (resp. $\max S$) the denote the minimum integer (resp. the maximum integer) in $S$. Recall $q=\ceil{k/m}$.
Define

\[\alpha(k) := \min\cb{i: C_{k, i} = \max\cb{C_{k, q}, \ldots, C_{k, k}}}.\]

\noindent Then $\alpha(k)\gs q\gs 1$. By Lemma \ref{easy_lemma} (\ref{easy_property1}), we have $ C_{k, \alpha(k)} > 0 $. 

\begin{prop}\label{unimodality}
Let $ k $ be a positive integer. We have

\begin{itemize}
\item[\rm(a)] $ 0 \ls \alpha(k) - \alpha(k - 1) \ls 1 $.
	
\item[\rm(b)] $C_{k,q}\ls \cdots \ls C_{k, \alpha(k) - 1} \ls C_{k, \alpha(k)} \gs C_{k, \alpha(k) + 1} \gs \cdots \gs C_{k,k}$.
\end{itemize}
\end{prop}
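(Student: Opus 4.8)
The plan is to prove (a) and (b) together by strong induction on $k$, taking as base cases the range $k \ls m$, where $q = 1$ and Lemma~\ref{easy_lemma}(\ref{easy_property4}) gives $C_{k,\ell} = \binom{k-1}{\ell-1}$. These are ordinary binomial rows, which are unimodal with minimal maximizer $\alpha(k) = \ceil{k/2}$, so both (a) and (b) hold outright. For the inductive step I would not use Lemma~\ref{easy_lemma}(\ref{easy_property5}) directly but rather the three-term recurrence
\[
C_{k,\ell} = C_{k-1,\ell} + C_{k-1,\ell-1} - C_{k-m-1,\ell-1},
\]
obtained from it by subtracting the instance at $(k-1,\ell)$ from the instance at $(k,\ell)$ and telescoping the two sums $\sum_{i=1}^{m} C_{k-i,\ell-1}$ and $\sum_{i=1}^{m} C_{k-1-i,\ell-1}$. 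Writing $g_k(\ell) := C_{k,\ell} - C_{k,\ell-1}$, the target is that $g_k$ is positive and then nonpositive, with the sign change occurring at $\ell = \alpha(k)$, and that $\alpha(k) - \alpha(k-1) \in \{0,1\}$.

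A first reduction is that the correction term vanishes on the right: since $C_{k-m-1,\ell-1} \ne 0$ forces $\ell \ls k-m$ by Lemma~\ref{easy_lemma}(\ref{easy_property1}), one has $C_{k,\ell} = C_{k-1,\ell} + C_{k-1,\ell-1}$ for every $\ell \gs k-m+1$. On this range row $k$ is therefore the convolution of the (inductively unimodal) row $k-1$ with the sequence $(1,1)$. I would record the elementary fact that the sum of a unimodal sequence and its shift by one is again unimodal, with minimal maximizer moved by $0$ or $1$; this pins down the entire decreasing tail of row $k$ and shows that any maximizer lying in this range is at most $\alpha(k-1)+1$.

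The main obstacle is the complementary range $\ell \ls k-m$, where the negative correction is active; note that $\alpha(k)$ may genuinely lie inside this range, so the peak itself is \emph{not} governed by the clean convolution above. Here I would analyse the first difference through the all-positive recurrence $g_k(\ell) = \sum_{i=1}^{m} g_{k-i}(\ell-1)$, an immediate consequence of Lemma~\ref{easy_lemma}(\ref{easy_property5}). By induction each $g_{k-i}$ is positive then nonpositive with sign change at $\alpha(k-i)$, and by the inductive form of (a) the thresholds $\alpha(k-1) \gs \alpha(k-2) \gs \cdots \gs \alpha(k-m)$ span a window of length at most $m-1$. The difficulty is that a sum of positive-then-nonpositive sequences whose sign-change points are spread over such a window need not have a single sign change; simple examples show that mere unimodality of the summands is insufficient and that the sum can oscillate. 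Consequently the argument cannot be purely formal: it must use the actual sizes of the $C_{k-i,\ell}$, and the heart of the matter is a quantitative comparison showing that the positive contributions dominate up to exactly one crossover. I expect this magnitude estimate -- most naturally packaged as a strengthened inductive invariant of log-concave or total-positivity type linking consecutive rows, which (unlike bare unimodality) is stable under the convolution appearing in the recurrence -- to be the one genuinely hard step.

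Granting the single crossover of $g_k$, part (b) is immediate and the location of the crossover defines $\alpha(k)$. Part (a) then follows by pinning this location through the two-zone description: the upper bound $\alpha(k) \ls \alpha(k-1)+1$ combines the right-zone convolution estimate with the left-zone domination, which together forbid the crossover from advancing by more than one, while the lower bound $\alpha(k) \gs \alpha(k-1)$ reflects that neither the $(1,1)$-convolution nor the left-zone correction moves the crossover to the left. This closes the induction and establishes both (a) and (b).
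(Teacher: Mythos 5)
Your setup (strong induction on $k$, base case $k\ls m$ via Lemma~\ref{easy_lemma}(\ref{easy_property4}), and the decreasing tail handled by showing each summand of $C_{k,s}-C_{k,s+1}=\sum_{i=k-m}^{k-1}(C_{i,s-1}-C_{i,s})$ is nonnegative once $s-1\gs\alpha(k-1)\gs\cdots\gs\alpha(k-m)$) matches the paper. But the proof is not complete: for the increasing part, i.e.\ $C_{k,s}\gs C_{k,s-1}$ for $s\ls\alpha(k-1)$, you correctly identify that the naive summation $g_k(s)=\sum_{i=1}^m g_{k-i}(s-1)$ fails because the peaks $\alpha(k-1),\dots,\alpha(k-m)$ are staggered, and then you explicitly defer the resolution ("Granting the single crossover\dots", "I expect this magnitude estimate\dots to be the one genuinely hard step"). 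That deferred step is the entire content of the hard half of the proposition, so as written this is a genuine gap, not a proof.

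Moreover, your guess about how to close it points in the wrong direction: no strengthened log-concave or total-positivity invariant is needed. The paper closes the left zone with bare unimodality plus the inductive part (a), split into two cases according to the sign of $C_{k-m-1,s-2}-C_{k-m-1,s-1}$. If row $k-m-1$ is already nonincreasing at $s-1$, a double application of Lemma~\ref{easy_lemma}(\ref{easy_property5}) telescopes the difference into
\[
C_{k,s}-C_{k,s-1}=\pt{C_{k-m-1,s-2}-C_{k-m-1,s-1}}+\pt{C_{k-1,s}-C_{k-1,s-2}},
\]
and both brackets are nonnegative (the second because $s\ls\alpha(k-1)$). Otherwise $\alpha(k-m-1)\gs s-1$, and part (a) of the inductive hypothesis forces $\alpha(k-i)\gs s-1$ for all $i=1,\dots,m$, so every summand in the naive expansion is nonnegative after all. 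Your "two-zone" split by whether $\ell\ls k-m$ (where the correction term $C_{k-m-1,\ell-1}$ is supported) is also not the relevant dichotomy, and the $(1,1)$-convolution observation only covers $\ell\gs k-m+1$, which need not reach down to $\alpha(k-1)+1$. You would need to replace the heuristic left-zone discussion with an actual argument of this kind before the induction closes.
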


\begin{proof}
We prove it by induction on $ k $. If $1\ls k\ls 2$, then $q=\ceil{k/m}=1$ by  $m\gs 2$. By Lemma \ref{easy_lemma} (ii) and (iii), $ C_{1, 1} = C_{2, 1} = C_{2, 2} = 1 $. So $ \alpha(1) = \alpha(2) = 1 $, which implies the proposition holds for $ k \ls 2 $. When $ q = 1 $, by Lemma \ref{easy_lemma} (\ref{easy_property4}), this proposition holds according to the basic properties of binomial coefficients. Thus we may assume that $ k > m $ which implies $q\ge 2$. To prove that the proposition holds for $ k $, it suffices to show the following two statements holds.
	
\begin{enumerate}
\item\label{first_ineq} $ C_{k, \alpha(k - 1) + 1} \gs C_{k, \alpha(k - 1) + 2} \gs \cdots \gs C_{k,k}$.
		
\item\label{second_ineq} $ C_{k, \alpha(k - 1)} \gs C_{k, \alpha(k - 1) - 1} \gs \cdots \gs C_{k,q}$.
\end{enumerate}

\noindent In fact, if (\ref{first_ineq}) and (\ref{second_ineq}) hold simultaneously, then $\max\{C_{k, q}, \ldots, C_{k, k}\}\in \{ C_{k, \alpha(k - 1)} , C_{k, \alpha(k - 1) + 1}\} $. Hence $ \alpha(k) \in \{\alpha(k - 1), \alpha(k - 1) + 1 \}$. Thus {\rm(b)} holds.
	
(\ref{first_ineq}). Let $ s \gs \alpha(k - 1) + 1 $. By inductive hypothesis on {\rm(a)}, we have

\[s > s - 1 \gs \alpha(k - 1) \gs \alpha(k - 2) \gs \cdots \gs \alpha(k - m).\]

\noindent By Lemma \ref{easy_lemma} (\ref{easy_property5}), we have
		
\begin{align*}
C_{k, s} - C_{k, s + 1}{}={}&\sum_{i = k - m}^{k - 1}C_{i, s - 1} - \sum_{i = k - m}^{k - 1}C_{i, s} \\
={}&\sum_{i = k - m}^{k - 1}\pt{C_{i, s - 1} - C_{i, s}} \\
\gs{}&0.
\end{align*}

\noindent The last inequality holds by inductive hypothesis on {\rm(b)}.

(\ref{second_ineq}). Let $ s \ls \alpha(k - 1) $. We will show that $C_{k, s} \gs C_{k, s - 1}$. If $s=1$, the result holds obviously by $C_{k,0}=0$. Note that $C_{k,1}=0$ by Lemma \ref{easy_lemma} (\ref{easy_property1}) and $q\gs 2$. So the result holds if $s=2$. Thus we assume $s\ge 3$ and discuss it in two cases.
		
\begin{itemize}
\item[Case \rm1.] $ C_{k - m - 1, s - 2} > C_{k - m - 1, s - 1} $. By Lemma \ref{easy_lemma} (\ref{easy_property5}), we have
			
\begin{align*}
C_{k, s} - C_{k, s - 1}{}={}&\pt{\sum_{i = k - m}^{k - 1}C_{i, s - 1}} - C_{k, s - 1} \\
={}&\pt{\sum_{i = k - m - 1}^{k - 2} C_{i, s - 1}} - C_{k - m - 1, s - 1} + C_{k - 1, s - 1} - C_{k, s - 1} \\
={}&C_{k - 1, s} - C_{k - m - 1, s - 1} + \pt{C_{k - 1, s - 1} - C_{k, s - 1}} \\
={}&C_{k - 1, s} - C_{k - m - 1, s - 1} + \pt{\sum_{i = k - m - 1}^{k - 2}C_{i, s - 2} - \sum_{i = k - m}^{k - 1}C_{i, s - 2}} \\
={}&C_{k - 1, s} - C_{k - m - 1, s - 1} + C_{k - m - 1, s - 2} - C_{k - 1, s - 2} \\
={}&\pt{C_{k - m - 1, s - 2} - C_{k - m - 1, s - 1}} + \pt{C_{k - 1, s} - C_{k - 1, s - 2}} \\
\gs{}&0.
\end{align*}

\noindent The last inequality holds by inductive hypothesis on {\rm(b)} and the condition $ C_{k - m - 1, s - 2} > C_{k - m - 1, s - 1} $.
			
\item[Case \rm2.] $ C_{k - m - 1, s - 2} \ls C_{k - m - 1, s - 1} $. In this case, $ \alpha(k - m - 1) \gs s - 1 $. By inductive hypothesis on {\rm(a)}, $ \alpha(k - 1) \gs \alpha(k - 2) \gs \ldots \gs \alpha(k - m - 1) \gs s -1 $. By Lemma \ref{easy_lemma} (\ref{easy_property5}) again, we have
			
\begin{align*}
C_{k, s} - C_{k, s - 1}{}={}&\pt{\sum_{i = k - m}^{k - 1}C_{i, s - 1}} - \pt{\sum_{i = k - m}^{k - 1}C_{i, s - 2}} \\
={}&\sum_{i = k - m}^{k - 1} \pt{C_{i, s - 1} - C_{i, s - 2}} \\
\gs{}&0.
\end{align*}

\noindent The last inequality holds by inductive hypothesis on {\rm(b)}. \qedhere

\end{itemize}
\end{proof}
By Proposition \ref{unimodality}, the sequence of $\{C_{k,1},C_{k,2},\ldots,C_{k,n}\}$ is unimodal.
Recall the definition of the binomial coefficient, $ \binom{a}{b} = 0 $ if $ a < \max\{0, b\} $. Let $ j, \ell, r $ be positive integers with $ 1 \ls j < r \ls m $, denote

\[S_{j, \ell, r} := \cb{i: i \in \BZ, 0 \ls i \ls r - 1 \text{ and } \binom{r - 1}{i} - \binom{r - j - 1}{i} \gs \ell}.\]
Then $\min S_{j, \ell, r}\gs 0$ and $\max S_{j, \ell, r}\ls r-1$. We have the follpwing result.
\begin{lem}\label{consecutive}
$ S_{j, \ell, r} $ is a sequence of consecutive integers.
\end{lem}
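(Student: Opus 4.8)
The plan is to recast $S_{j,\ell,r}$ as a super-level set of an explicit integer sequence and then prove that sequence is unimodal. For $0 \ls i \ls r - 1$ set
\[f(i) := \binom{r - 1}{i} - \binom{r - j - 1}{i},\]
so that $S_{j, \ell, r} = \cb{i : 0 \ls i \ls r - 1,\ f(i) \gs \ell}$. If I can show that $\cb{f(0), f(1), \ldots, f(r - 1)}$ is unimodal, say $f(0) \ls \cdots \ls f(p) \gs \cdots \gs f(r - 1)$, then $S_{j,\ell,r}$ is automatically a set of consecutive integers: whenever $f(a) \gs \ell$ and $f(b) \gs \ell$ with $a < b$, monotonicity on each side of the peak $p$ forces $f(i) \gs \ell$ for every $a \ls i \ls b$. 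Throughout I abbreviate $N := r - 1$ and $M := r - j - 1$, so that $0 \ls M < N$ and $f(i) = \binom{N}{i} - \binom{M}{i} \gs 0$.

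To pin down the shape of $f$ I would study its consecutive differences. From $\binom{N}{i + 1} - \binom{N}{i} = \binom{N}{i}\tfrac{N - 2i - 1}{i + 1}$ (and the analogous identity for $M$), a short manipulation -- rewriting $M - 2i - 1 = (N - 2i - 1) - (N - M)$ and using $N - M = j$ -- yields the identity
\[(i + 1)\pt{f(i + 1) - f(i)} = (N - 2i - 1)\,f(i) + j\binom{M}{i} =: G(i)\]
valid for $0 \ls i \ls N - 1$. Since $i + 1 > 0$, the sign of $f(i + 1) - f(i)$ equals that of $G(i)$, so the unimodality of $f$ is equivalent to the assertion that $\cb{i : G(i) \gs 0}$ is an initial segment $\cb{0, 1, \ldots, p}$ of the index range.

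I would then analyse the sign of $G$ by cases. If $i \ls (N - 1)/2$, then $N - 2i - 1 \gs 0$ and both summands of $G(i)$ are non-negative, whence $G(i) \gs 0$. For $i > (N - 1)/2$ we have $2i + 1 - N > 0$, and I distinguish two sub-cases. When $i > M$ one has $\binom{M}{i} = 0$ and $G(i) = (N - 2i - 1)\binom{N}{i} < 0$. When $(N - 1)/2 < i \ls M$ -- the delicate sub-case, where the two terms of $G(i)$ have opposite signs -- one has $\binom{M}{i} > 0$, and dividing $G(i) \gs 0$ through by $\binom{M}{i}$ recasts it as
\[(2i + 1 - N)\pt{\frac{\binom{N}{i}}{\binom{M}{i}} - 1} \ls j.\]
The key observation is that the left-hand side is non-decreasing in $i$ on this range: the factorisation $\binom{N}{i}/\binom{M}{i} = \prod_{s = 1}^{j}\frac{M + s}{M - i + s}$ exhibits the ratio as a product of terms each $\gs 1$ and non-decreasing in $i$, so $\binom{N}{i}/\binom{M}{i} - 1 \gs 0$ is non-decreasing, while $2i + 1 - N > 0$ is positive and increasing; their product is therefore non-decreasing. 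Hence $G(i) \gs 0$ holds on an initial segment of this sub-range and fails thereafter, so on the whole range $i > (N - 1)/2$ the set $\cb{i : G(i) \gs 0}$ is an initial segment. Combining with the range $i \ls (N - 1)/2$ shows that $\cb{i : G(i) \gs 0}$ is an initial segment of the full index range, which gives the unimodality of $f$ and hence the lemma.

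I expect the middle sub-range $(N - 1)/2 < i \ls M$ to be the only genuine obstacle: it is precisely where a naive ``difference of two unimodal sequences'' argument could break down, and the monotonicity of the binomial ratio $\binom{N}{i}/\binom{M}{i}$ is exactly what forbids a second sign change of $G$ there. The two outer ranges, together with the passage from a single sign change of the difference sequence to the interval property of $S_{j,\ell,r}$, are routine.
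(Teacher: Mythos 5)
Your proof is correct, and it shares the paper's top-level reduction: both arguments show that the sequence $\cb{f(0),\ldots,f(r-1)}$ with $f(i)=\binom{r-1}{i}-\binom{r-j-1}{i}$ is unimodal and then observe that a super-level set of a unimodal sequence is an interval. Where you genuinely diverge is in how unimodality is established. The paper rewrites $f(i)=\sum_{p=r-j-1}^{r-2}\binom{p}{i-1}$ by telescoping and compares $f(i)$ with $f(i+1)$ term by term on three ranges split at $\ceil{(r-j-1)/2}$ and $\ceil{(r-1)/2}$; with that identity the two outer ranges follow from the unimodality of each row of Pascal's triangle, and the middle range is precisely where your ``both summands non-negative'' observation applies, so no range is delicate. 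You instead work from the exact difference identity $(i+1)\pt{f(i+1)-f(i)}=(N-2i-1)f(i)+j\binom{M}{i}$ (in your notation $N=r-1$, $M=r-j-1$; I verified the identity) and show that its non-negativity set is an initial segment. This forces you to confront the sub-range $(N-1)/2<i\ls M$, which you resolve correctly via the monotonicity of $\binom{N}{i}/\binom{M}{i}=\prod_{s=1}^{j}\frac{M+s}{M-i+s}$ — legitimate there since the denominators $M-i+s$ stay positive for $i\ls M$ — and the case ordering (all of the first range, an initial segment of the second, none of the third) does yield a single sign change. The trade-off: the paper's telescoping identity makes the proof shorter and pushes all the work into standard facts about binomial rows, while your difference analysis is more self-contained, locates the peak explicitly as the last index where the difference is non-negative, and would adapt more readily to differences of binomial coefficients for which no telescoping form is available.
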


\begin{proof}
Define

\[f(i) := \binom{r - 1}{i} - \binom{r - j - 1}{i} .\]
Then $f(i)= \sum_{p = r - j - 1}^{r - 2}\binom{p}{i - 1}$ from the recursive formula of the binomial coefficient.
	
\begin{enumerate}
\item When $ 0 \ls i \ls \ceil{\frac{r - j - 1}{2}} $, we have
		
\[f(i) = \sum_{p = r - j - 1}^{r - 2}\binom{p}{i - 1} \ls \sum_{p = r - j - 1}^{r - 2}\binom{p}{i} = f(i + 1).\]
		
\item When $ \ceil{\frac{r - j - 1}{2}} < i < \ceil{\frac{r - 1}{2}} $, we have
		
\[f(i) = \binom{r - 1}{i} - \binom{r - j - 1}{i} \ls \binom{r - 1}{i + 1} - \binom{r - j - 1}{i + 1} = f(i + 1).\]
		
\item When $ \ceil{\frac{r - 1}{2}} \ls i \ls r - 1 $, we have
		
\[f(i) = \sum_{p = r - j - 1}^{r - 2}\binom{p}{i - 1} \gs \sum_{p = r - j - 1}^{r - 2}\binom{p}{i} = f(i + 1).\]
\end{enumerate}
	
\noindent Hence $ \{f(i):~0\ls i\ls r-1\}$ is unimodal, and we may assume there is an integer $ \beta $ with $ 0 \ls \beta \ls r - 1 $ such that

\[f(0) \ls \cdots \ls f(\beta - 1) \ls f(\beta) \gs f(\beta + 1) \gs \cdots \gs f(r - 1).\]

\noindent If $ i_0 \in S_{j, \ell, r} $ with $ i_0 \ls \beta $, then $ f(\beta) \gs f(\beta - 1) \gs \cdots \gs f(i_0) \gs \ell $, which implies $ [i_0, \beta] \subseteq S_{j, \ell, r} $. If $ i_0' \in S_{j, \ell, r} $ with $ i_0' \gs \beta $, then $ f(\beta) \gs f(\beta + 1) \gs \cdots \gs f(i_0') \gs \ell $, which implies $ [\beta, i_0'] \subseteq S_{j, \ell, r} $. It implies that $ S_{j, \ell, r} $ is a sequence of consecutive natural numbers.
\end{proof}

\begin{lem}\label{transform}
Let  $ 1 \ls r \ls m $ and $s\gs 2$. We have

\begin{equation}\label{eqn2}
C_{(q - 1)m + r, s} = \sum_{j = 1}^{m}\sum_{\ell = 1}^{\infty}\sum_{i \in S_{j, \ell, r}}C_{(q - 2)m + j, s - i - 1}.
\end{equation}

\end{lem}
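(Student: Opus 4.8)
The plan is to prove \eqref{eqn2} by a direct combinatorial argument, using the interpretation $C_{k,\ell}=\abs{\cb{(m_1,\ldots,m_\ell)\in[m]^\ell:\,m_1+\cdots+m_\ell=k}}$ recorded in the proof of Lemma \ref{easy_lemma}. Throughout I assume $q\gs 2$ (so that $(q-2)m\gs 0$), which is the regime in which the lemma is applied.

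First I would collapse the two innermost sums on the right-hand side. For fixed $j$ and $i$, the membership $i\in S_{j,\ell,r}$ holds precisely for $1\ls \ell\ls \binom{r-1}{i}-\binom{r-j-1}{i}$, and this difference is non-negative since $r-1\gs r-j-1$ (and, for $j\gs r$, it equals $\binom{r-1}{i}$ under the stated binomial convention). Hence, swapping the order of summation,
\[ \sum_{\ell=1}^{\infty}\sum_{i\in S_{j,\ell,r}}C_{(q-2)m+j,\,s-i-1}=\sum_{i=0}^{r-1}\left[\binom{r-1}{i}-\binom{r-j-1}{i}\right]C_{(q-2)m+j,\,s-i-1}, \]
so it suffices to prove the reduced identity obtained by summing this over $1\ls j\ls m$.

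Next I would attach a combinatorial meaning to the weight $\binom{r-1}{i}-\binom{r-j-1}{i}$: it counts the compositions $(a_1,\ldots,a_{i+1})$ of $r$ into $i+1$ positive parts with $a_1\ls j$. Indeed $\binom{r-1}{i}$ is the number of all such compositions, while subtracting $j$ from the first part maps bijectively those with $a_1\gs j+1$ onto compositions of $r-j$, of which there are $\binom{r-j-1}{i}$. Composing with $a_1\mapsto a_1+(m-j)$ then identifies this set with the tuples $(b_1,\ldots,b_{i+1})\in[m]^{i+1}$ satisfying $b_1+\cdots+b_{i+1}=m+r-j$ and $b_1\gs m-j+1$; here the remaining parts stay in $[m]$ automatically, since $b_2+\cdots+b_{i+1}\ls r-1\ls m-1$. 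Thus the weight equals the number of such suffix tuples.

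The heart of the argument is a single global bijection on $\cb{(m_1,\ldots,m_s)\in[m]^s:\sum_p m_p=(q-1)m+r}$, the set counted by the left-hand side. Given such a tuple, let $p$ be the unique index at which the partial sums first exceed $(q-1)m$; this exists because the total exceeds $(q-1)m$ and each step is at most $m$. Split at $p$ into a prefix $(m_1,\ldots,m_{p-1})$ and a suffix $(m_p,\ldots,m_s)$. Since the jump at $p$ is at most $m$, the prefix sum lies in $\big((q-2)m,(q-1)m\big]$, hence equals $(q-2)m+j$ for a unique $j\in[m]$; writing $i+1$ for the suffix length, the suffix lies in $[m]^{i+1}$ and sums to $m+r-j$, the crossing condition $s_{p-1}+m_p>(q-1)m$ is exactly $m_p\gs m-j+1$, and $i\ls r-1$ follows from the suffix-sum bound of the previous paragraph. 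Conversely, any prefix counted by $C_{(q-2)m+j,\,s-i-1}$ concatenated with any admissible suffix reconstitutes a unique valid tuple. Summing over $j$ and $i$ and counting the suffixes by the bijection above as $\binom{r-1}{i}-\binom{r-j-1}{i}$ yields the reduced identity, and hence \eqref{eqn2}.

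The step I expect to be most delicate is pinning down the correct threshold for the split: the ``first exceed $(q-1)m$'' rule is what forces the prefix sum into the window $\big((q-2)m,(q-1)m\big]$ and converts the crossing into the clean inequality $m_p\gs m-j+1$ (equivalently $a_1\ls j$); any other level would destroy either the uniqueness of the split or the match with the weight. The other point requiring care is checking that no upper-bound constraint on the trailing suffix parts is active, so that the suffix count is genuinely $\binom{r-1}{i}-\binom{r-j-1}{i}$ rather than a truncation of it.
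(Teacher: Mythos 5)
Your proof is correct, but it takes a genuinely different route from the paper's. The paper first establishes the intermediate identity \eqref{eqn1}, namely $C_{(q-1)m+r,s}=\sum_{j=1}^{m}\sum_{i=0}^{r-1}\pt{\binom{r-1}{i}-\binom{r-j-1}{i}}C_{(q-2)m+j,s-i-1}$, by a double induction on $r$ and $s$ driven by the recurrence of Lemma \ref{easy_lemma} (\ref{easy_property5}), and then passes to \eqref{eqn2} by a summation by parts over the level sets $S_{j,\ell,r}$. You arrive at the same intermediate identity, but you prove it by a first-passage bijection: splitting each tuple counted by $C_{(q-1)m+r,s}$ at the first index where the partial sums exceed $(q-1)m$, and identifying the weight $\binom{r-1}{i}-\binom{r-j-1}{i}$ as the number of admissible suffixes. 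Your collapsing of the triple sum into the weighted double sum is the same computation as the paper's summation by parts, just read in the opposite direction, via $\sum_{\ell\gs 1}\mathbf{1}\sqb{i\in S_{j,\ell,r}}=\binom{r-1}{i}-\binom{r-j-1}{i}$. What the bijection buys is a conceptual explanation of where the weights come from and, in particular, why they are non-negative (a fact the paper needs anyway for the sets $S_{j,\ell,r}$ to carry the whole sum), at the cost of the boundary checks you correctly flag: the window $\pt{(q-2)m,(q-1)m}$ for the prefix sum, the crossing condition $m_p\gs m-j+1$, and the inactivity of the upper bound $m$ on the trailing suffix parts because their total is at most $r-1\ls m-1$. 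One small point: you restrict to $q\gs 2$; the paper does not state this hypothesis explicitly, but its own induction (and the only application, in Proposition \ref{weak_spirality}, where $k>m$) also requires it, so your restriction is harmless and arguably a clarification.
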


\begin{proof}
 We  first prove that the following equality holds by induction on $ r $ and $s$.
{\small
\begin{equation}\label{eqn1}	
C_{(q - 1)m + r, s} = \sum_{j = 1}^{m} \sum_{i = 0}^{r - 1} \pt{\binom{r - 1}{i} - \binom{r - j - 1}{i}}C_{(q - 2)m + j, s - i - 1}.	
\end{equation}	
}
\noindent If $s=2$, then $C_{(q - 2)m + j, s - i - 1}\not=0$ if and only if $i=0$. Thus (\ref{eqn1}) holds for $ s = 2 $ by Lemma \ref{easy_lemma} (\ref{easy_property5}). By Lemma \ref{easy_lemma} (\ref{easy_property5}), (\ref{eqn1}) holds for $ r = 1 $. Assume (\ref{eqn1}) holds for $ r \ls m - 1 $ and $s\gs 3$. By Lemma \ref{easy_lemma}  (\ref{easy_property5}), we have
{\small
\begin{align}
C_{(q - 1)m + r + 1, s}{}={}&\sum_{i = r - m + 1}^{r}C_{(q - 1)m + i, s - 1}\notag \\
={}&C_{(q - 1)m + r, s - 1} + \pt{\sum_{i = r - m}^{r - 1}C_{(q - 1)m + i, s - 1}} - C_{(q - 2)m + r, s - 1}\notag \\
={}&C_{(q - 1)m + r, s - 1} + C_{(q - 1)m + r, s} - C_{(q - 2)m + r, s - 1}.\label{subs1}
\end{align}
}
\noindent By inductive hypothesis, we have
{\small
\begin{align*}
C_{(q - 1)m + r, s - 1}{}={}&\sum_{j = 1}^{m}\sum_{i = 0}^{r - 1}\pt{\binom{r - 1}{i} - \binom{r - j - 1}{i}}C_{(q - 2)m + j, s - i - 2} \\
={}&\sum_{j = 1}^{m}\sum_{i = 1}^{r}\pt{\binom{r - 1}{i - 1} - \binom{r - j - 1}{i - 1}}C_{(q - 2)m + j, s - i - 1}
\end{align*}}

\noindent and
{\small
\[C_{(q - 1)m + r, s} = \sum_{j = 1}^{m}\sum_{i = 0}^{r - 1}\pt{\binom{r - 1}{i} - \binom{r - j - 1}{i}}C_{(q - 2)m + j, s - i - 1}.\]
}
\noindent Hence we have
{\small
\begin{align*}
C_{(q - 1)m + r + 1, s}{}=&C_{(q - 1)m + r, s - 1} + C_{(q - 1)m + r, s} - C_{(q - 2)m + r, s - 1} \\
=&{}\sum_{j = 1}^{m}\sum_{i = 1}^{r}\pt{\binom{r - 1}{i - 1} - \binom{r - j - 1}{i - 1}}C_{(q - 2)m + j, s - i - 1} \\
&{}+ \sum_{j = 1}^{m}\sum_{i = 0}^{r - 1}\pt{\binom{r - 1}{i} - \binom{r - j - 1}{i}}C_{(q - 2)m + j, s - i - 1} - C_{(q - 2)m + r, s - 1} \\
=&{}\sum_{j = 1}^{m}\sum_{i = 1}^{r - 1}\pt{\binom{r - 1}{i - 1} - \binom{r - j - 1}{i - 1} + \binom{r - 1}{i} - \binom{r - j - 1}{i}}C_{(q - 2)m + j, s - i - 1} \\
&{}+ \sum_{j = 1}^{m}C_{(q - 2)m + j, s - r - 1} + \sum_{j = 1}^{m}\pt{1 - \binom{r - j - 1}{0}}C_{(q - 2)m + j, s - 1} - C_{(q - 2)m + r, s - 1}\\
=&{}\sum_{j = 1}^{m}\sum_{i = 1}^{r - 1}\pt{\binom{r}{i} - \binom{r - j}{i}}C_{(q - 2)m + j, s - i - 1} + \sum_{j = 1}^{m}C_{(q - 2)m + j, s - r - 1} \\
&{}+ \sum_{j = 1}^{m}\pt{1 - \binom{r - j - 1}{0}}C_{(q - 2)m + j, s - 1} - C_{(q - 2)m + r, s - 1}
\end{align*}
\begin{align*}
=&{}\sum_{j = 1}^{m}\sum_{i = 1}^{r - 1}\pt{\binom{r}{i} - \binom{r - j}{i}}C_{(q - 2)m + j, s - i - 1} + \sum_{j = 1}^{m}C_{(q - 2)m + j, s - r - 1}\\
&{}+ \pt{\sum_{j = r}^{m}C_{(q - 2)m + j, s - 1} - C_{(q - 2)m + r, s - 1}}\\
=&{}\sum_{j = 1}^{m}\sum_{i = 1}^{r - 1}\pt{\binom{r}{i} - \binom{r - j}{i}}C_{(q - 2)m + j, s - i - 1} + \sum_{j = 1}^{m}C_{(q - 2)m + j, s - r - 1} + \sum_{j = r + 1}^{m}C_{(q - 2)m + j, s - 1} \\
=&{}\sum_{j = 1}^{m}\sum_{i = 0}^{r}\pt{\binom{r}{i} - \binom{r - j}{i}}C_{(q - 2)m + j, s - i - 1}.
\end{align*}}

Thus (\ref{eqn1}) holds. Now we prove (\ref{eqn2}) by doing summation by parts.

{\small
\begin{align*}
&\sum_{i = 0}^{r - 1}\pt{\binom{r - 1}{i} - \binom{r - j - 1}{i}}C_{(q - 2)m + j, s - i - 1} \\
={}&\sum_{\ell = 1}^{\infty} \abs{\cb{i \in \BZ: 0 \ls i \ls r - 1, \binom{r - 1}{i} - \binom{r - j - 1}{i} = \ell}} \cdot \ell \cdot C_{(q - 2)m + j, s - i - 1} \\
={}&\sum_{\ell = 1}^{\infty}\pt{\sum_{i \in S_{j, \ell, r}} \ell \cdot C_{(q - 2)m + j, s - i - 1} - \sum_{i \in S_{j, \ell + 1, r}} \ell \cdot C_{(q - 2)m + j, s - i - 1}} \\
={}&\sum_{\ell = 1}^{\infty}\pt{\sum_{i \in S_{j, \ell, r}} \ell \cdot C_{(q - 2)m + j, s - i - 1} - \sum_{i \in S_{j, \ell + 1, r}} (\ell + 1) \cdot C_{(q - 2)m + j, s - i - 1} + \sum_{i \in S_{j, \ell + 1, r}} C_{(q - 2)m + j, s - i - 1}} \\
={}&\sum_{\ell = 1}^{\infty}\sum_{i \in S_{j, \ell, r}}C_{(q - 2)m + j, s - i - 1}.
\end{align*}}

\noindent Thus the result follows from (\ref{eqn1}).
\end{proof}


\begin{lem}\label{equiv}
Let $ k, m $ be positive integers with $ k, m \gs 2 $, denote $ q := \ceil{k/m} $. The following two statements are equivalent.

\begin{itemize}
\item[\rm(a)]\label{equiv1} For any  integer $ d $ with $ 0\ls d \ls \frac{1}{2}\pt{k - q} $, we have

\[C_{k, q + d} \gs C_{k, k - d}.\]

\item[\rm(b)]\label{equiv2} Let $ S_1 $ and $ S_2 $ be two finite sets consisting of consecutive positive integers. If

\[\abs{S_1} = \abs{S_2}, \min S_1 + \max S_2 \gs k + q \text{ and } \min S_1 \ls \min S_2,\]

\noindent then we have

\[\sum_{i \in S_1} C_{k, i} \gs \sum_{i \in S_2} C_{k, i}.\]	
\end{itemize}
\end{lem}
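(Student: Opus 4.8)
The statement claims (a) ⟺ (b) for the sequence $C_{k,i}$.

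Let me think about what's really going on here.\textbf{The plan.} The two statements both concern the unimodal sequence $C_{k,q}, C_{k,q+1}, \ldots, C_{k,k}$, whose peak is at index $\alpha(k)$ by Proposition \ref{unimodality}. Statement (a) is a symmetry-type inequality: reading the sequence from the left end $q$ and from the right end $k$, the left value always dominates the value at the mirrored position (mirrored about the midpoint $(k+q)/2$). Statement (b) is a ``windowed sum'' version: any block of $|S_1|$ consecutive terms that sits at least as far left as a competing block $S_2$ of the same length, with the two blocks centered at a point at least $(k+q)/2$, has the larger sum. The plan is to prove (a) $\Rightarrow$ (b) by decomposing each windowed sum into a telescoping pairing of mirror terms, and (b) $\Rightarrow$ (a) by specializing to singleton (or near-singleton) windows.

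\textbf{(b) $\Rightarrow$ (a).} This is the easy direction. I would take $S_1 = \{q + d\}$ and $S_2 = \{k - d\}$, both singletons, so $|S_1| = |S_2| = 1$. The hypotheses of (b) require $\min S_1 + \max S_2 = (q+d) + (k-d) = k + q \gs k+q$, which holds with equality, and $\min S_1 = q + d \ls k - d = \min S_2$, which holds precisely because $d \ls \frac{1}{2}(k-q)$. Then (b) yields $C_{k, q+d} \gs C_{k, k-d}$, which is exactly (a).

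\textbf{(a) $\Rightarrow$ (b).} This is the substantive direction. Given $S_1, S_2$ as in (b), write $|S_1| = |S_2| = w$, say $S_1 = \{a, a+1, \ldots, a+w-1\}$ and $S_2 = \{b, b+1, \ldots, b+w-1\}$ with $a \ls b$ and $a + (b+w-1) \gs k+q$. The idea is to match the element $a+t$ of $S_1$ with the element $b + (w-1-t)$ of $S_2$ for each $t = 0, 1, \ldots, w-1$, so that the pair sums run $a + b + w - 1 \gs k+q$ for every $t$; this is the key consequence of the centering hypothesis $\min S_1 + \max S_2 \gs k+q$. For each such matched pair, set $i_1 = a+t$ and $i_2 = b + w - 1 - t$; one checks $i_1 \ls i_2$ and $i_1 + i_2 \gs k+q$, so writing $i_1 = q + d$ with $d = i_1 - q \gs 0$ gives $i_2 \ls k - d$ (since $i_1 + i_2 \gs k+q$ forces $i_2 \gs k+q-i_1 = k-d$, wait---I must orient this so the dominating term is the left one). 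Concretely, from $i_1 + i_2 \gs k+q$ and $i_1 \ls i_2$ one gets $i_1 \ls \frac{k+q}{2} \ls i_2$, and the mirror of $i_2$ about $\frac{k+q}{2}$ is $k+q-i_2 \ls i_1$; since the sequence rises to its peak $\alpha(k)$ and $k+q-i_2 \ls i_1 \ls \frac{k+q}{2}$, unimodality plus (a) gives $C_{k,i_1} \gs C_{k, k+q-i_2}$ and then (a) gives $C_{k,k+q-i_2} \gs C_{k,i_2}$ when $i_2$ lies on the right branch. Summing the matched inequalities $C_{k,i_1} \gs C_{k,i_2}$ over $t$ yields $\sum_{i \in S_1} C_{k,i} \gs \sum_{i \in S_2} C_{k,i}$.

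\textbf{Main obstacle.} The delicate point is that the naive mirror argument only directly compares a term to the reflection of another term, whereas (a) is stated as a comparison between $C_{k,q+d}$ and $C_{k,k-d}$, i.e.\ reflections about the \emph{fixed} axis $\frac{k+q}{2}$. The real work is to show that for each matched pair $(i_1, i_2)$ with $i_1 \ls i_2$ and $i_1 + i_2 \gs k+q$, one indeed has $C_{k,i_1} \gs C_{k,i_2}$; this requires combining (a) with the unimodality of Proposition \ref{unimodality}, handling separately the case where $i_2$ already lies to the right of the peak (pure unimodal descent from a mirror point) and the case where both $i_1,i_2$ straddle the axis. I expect the careful bookkeeping of these reflection/unimodality comparisons, and verifying the pairing keeps every index inside the support $[q,k]$, to be the crux; the summation itself is then immediate.
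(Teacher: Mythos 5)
Your overall strategy coincides with the paper's: the direction (b) $\Rightarrow$ (a) via singletons $S_1=\{q+d\}$, $S_2=\{k-d\}$ is exactly the paper's argument, and for (a) $\Rightarrow$ (b) the paper uses the very same pairing of the $t$-th smallest element of $S_1$ with the $t$-th largest element of $S_2$, so that every pair $(i_1,i_2)$ satisfies $i_1+i_2=\min S_1+\max S_2\gs k+q$, and then combines (a) with the unimodality of Proposition \ref{unimodality} (noting $\alpha(k)\ls\frac12(k+q)$, which follows from (a)).

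However, the per-pair comparison as you wrote it has a concrete misstep. First, the deduction ``from $i_1+i_2\gs k+q$ and $i_1\ls i_2$ one gets $i_1\ls\frac{k+q}{2}$'' is false (take $i_1=i_2=k$); only $i_2\gs\frac{k+q}{2}$ follows. Second, and more importantly, your chain $C_{k,i_1}\gs C_{k,k+q-i_2}\gs C_{k,i_2}$ mirrors the \emph{wrong} index: the first inequality compares $i_1$ with $k+q-i_2\ls i_1$, two points both to the left of the axis $\frac{k+q}{2}$ but possibly on opposite sides of the peak $\alpha(k)$, where neither (a) nor unimodality gives the comparison. The repair (which is what the paper does) is to mirror $i_1$ instead: when $i_1<\frac12(k+q)$, statement (a) with $d=i_1-q$ gives $C_{k,i_1}\gs C_{k,k+q-i_1}$, and since $k+q-i_1\ls i_2$ with both indices $>\frac12(k+q)\gs\alpha(k)$, unimodal descent gives $C_{k,k+q-i_1}\gs C_{k,i_2}$; when $i_1\gs\frac12(k+q)$, both $i_1$ and $i_2$ lie at or beyond $\alpha(k)$ and unimodal descent alone suffices (this step uses $i_1\ls i_2$, which is why one should first reduce to disjoint $S_1,S_2$ by deleting common terms — a reduction you omit and which your pairing needs, since for overlapping blocks some pairs have $i_1>i_2$). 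With these two corrections your argument becomes the paper's proof.
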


\begin{proof}
If we take $ S_1 = \{q + d\} $ and $ S_2 = \cb{k - d} $ for any  integer $ d $ with $ 0\ls d \ls \frac{1}{2}\pt{k + q} $, then we have {\rm(b)} implies {\rm(a)}.

Now we prove that {\rm(a)} implies {\rm(b)}. Denote $ m_i := \min S_i $ and $ M_i := \max S_i $. Then $m_1+M_2\gs k+q$. We may assume $ M_1 < m_2 $; otherwise we just need to delete the same terms of both sets. Suppose {\rm(a)} holds.   Then
\[\max\cb{C_{k, q}, \ldots, C_{k, k}} = \max\cb{C_{k, q}, \ldots, C_{k, q + d_0}}\]

\noindent where $ d_0 = \floor{\frac{1}{2}\pt{k - q}}$. Thus $ \alpha(k) \ls q + d_0 \ls \frac{1}{2}\pt{k + q} $.	Note that $ \abs{S_1} = \abs{S_2} $ implies $ M_1 - m_1 = M_2 - m_2 =: \mu $. Thus we have

\[\sum_{i \in S_1}C_{k, i} - \sum_{i \in S_2}C_{k, i} = \sum_{i = 0}^{\mu}\pt{C_{k, m_1 + i} - C_{k, M_2 - i}}.\]

\begin{enumerate}
\item If $ m_1 + i < \frac{1}{2}\pt{k + q} $ for some $0\ls i\ls \mu$, then $ 2(m_1 + i - q) < k - q $. By (a), we have $C_{k, m_1 + i} = C_{k, q + (m_1 + i - q)} \gs C_{k, k - (m_1 + i - q)} = C_{k, k + q - m_1 - i}$. Since $m_1+M_2\gs k+q$, we also have $ M_2 - i \gs k + q - m_1 - i > \frac{1}{2}\pt{k + q} \gs \alpha(k) $. By Proposition \ref{unimodality} {\rm(b)}, we have $C_{k, k + q - m_1 - i} \gs C_{k, M_2 - i}$. Thus we have $C_{k, m_1 + i}\gs C_{k, M_2 - i}$.




\item If $ m_1 + i \gs \frac{1}{2}\pt{k + q} $  for some $0\ls i\ls \mu$, then $ M_2 - i \gs m_2 > M_1 \gs m_1 + i \gs \alpha(k) $. By Proposition \ref{unimodality} {\rm(b)}, we have $C_{k, m_1 + i} \gs C_{k, M_2 - i}.$

\end{enumerate} By (i) and (ii), we have $\sum_{i \in S_1}C_{k, i} \gs \sum_{i \in S_2}C_{k, i}$.
\end{proof}

\begin{prop}\label{weak_spirality}
	
Let $ k, m $ be positive integers with $ k, m \gs 2 $. For any positive integer $ d $ with $ 2d \ls k - \ceil{k/m} $, we have
\[C_{k, \ceil{k/m} + d} \gs C_{k, k - d}.\]
\end{prop}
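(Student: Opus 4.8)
The plan is to prove the inequality $C_{k,\ceil{k/m}+d}\gs C_{k,k-d}$, which is exactly statement {\rm(a)} of Lemma~\ref{equiv}, by induction on $q=\ceil{k/m}$, setting up the induction so that the transform formula of Lemma~\ref{transform} lowers $q$ by one. For the base case $q=1$ one has $k\ls m$, so Lemma~\ref{easy_lemma}(\ref{easy_property4}) gives $C_{k,\ell}=\binom{k-1}{\ell-1}$; then $C_{k,q+d}=\binom{k-1}{d}=\binom{k-1}{k-1-d}=C_{k,k-d}$, and the statement holds with equality. (The case $d=0$ is trivial throughout.)

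For the inductive step assume $q\gs 2$ and write $k=(q-1)m+r$ with $1\ls r\ls m$. Applying Lemma~\ref{transform} to both $s=q+d$ and $s=k-d$ (both are $\gs 2$ since $q\gs 2$ and $k-d\gs q+d$) writes $C_{k,q+d}$ and $C_{k,k-d}$ as identical triple sums over $j$, $\ell$ and $i\in S_{j,\ell,r}$ of terms $C_{k_j,\,s-i-1}$, where $k_j:=(q-2)m+j$ satisfies $\ceil{k_j/m}=q-1$. It therefore suffices to prove, for each fixed $(j,\ell)$, that $\sum_{i\in S_{j,\ell,r}}C_{k_j,(q+d)-i-1}\gs\sum_{i\in S_{j,\ell,r}}C_{k_j,(k-d)-i-1}$ and then sum over $(j,\ell)$. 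Since $S_{j,\ell,r}$ is an interval of consecutive integers by Lemma~\ref{consecutive}, the two inner sums range over consecutive windows $T_1,T_2$ of second subscripts that are translates of one another, $T_2$ being obtained from $T_1$ by the rightward shift $(k-d)-(q+d)=k-q-2d\gs 0$. As $k_j$ has strictly smaller $q$-value, the inductive hypothesis gives statement {\rm(a)} for $k_j$, hence statement {\rm(b)} of Lemma~\ref{equiv} for $k_j$; applying {\rm(b)} with $S_1=T_1$, $S_2=T_2$ yields the required inequality.

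The hypotheses of Lemma~\ref{equiv}{\rm(b)} that must be verified are $\abs{T_1}=\abs{T_2}$ (immediate), $\min T_1\ls\min T_2$ (immediate from $k-q-2d\gs 0$), and the binding condition $\min T_1+\max T_2\gs k_j+(q-1)$. Writing $a=\min S_{j,\ell,r}$ and $b=\max S_{j,\ell,r}$, a short computation using $k=(q-1)m+r$ reduces this last condition to the purely combinatorial bound $a+b\ls m+r-1-j$. Because $S_{j,\ell,r}$ does not involve $m$, and $r\ls m$, $j\ls r-1$ force $m+r-1-j\gs r$, it is enough to establish $\min S_{j,\ell,r}+\max S_{j,\ell,r}\ls r$. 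I expect this to be the main obstacle. The plan for it is to use the interpretation of $f(i)=\binom{r-1}{i}-\binom{r-j-1}{i}$ as the number of $i$-subsets of $[r-1]$ meeting its top $j$ elements: one checks $f(0)=0$, so $a\gs 1$, and, using that $\{f(i)\}$ is unimodal (Lemma~\ref{consecutive}) with peak lying inside the super-level set $S_{j,\ell,r}$, one proves the reflection inequality $f(r-i)\gs f(i)$ for $i\gs\ceil{r/2}$. This forces $r-b\in S_{j,\ell,r}$ whenever $b\gs\ceil{r/2}$, whence $a\ls r-b$ and $a+b\ls r$; the residual case $b<\ceil{r/2}$ collapses $S_{j,\ell,r}$ to a single point near the peak, again giving $a+b\ls r$.

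Two technical points remain to be dispatched. First, the indices $j\gs r$ appearing in Lemma~\ref{transform} fall outside the range $1\ls j<r$ of Lemma~\ref{consecutive}; for these the term $\binom{r-j-1}{i}$ vanishes and $S_{j,\ell,r}$ is a super-level set of the symmetric sequence $\binom{r-1}{\cdot}$, so $a+b=r-1$ directly and the bound is immediate. Second, some subscripts $s-i-1$ may be non-positive or exceed $k_j$, where $C_{k_j,\cdot}=0$; these vanishing terms are handled exactly as in the proof of Lemma~\ref{equiv}, where the endpoint condition $\min S_1+\max S_2\gs k_j+q_j$ guarantees that any excess at the bottom of $T_1$ is matched by an equal number of zeros at the top of $T_2$, so the comparison is unaffected. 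The lone degenerate case $q=2$ with $k_j=j=1$ (just below the hypothesis $k\gs 2$ of Lemma~\ref{equiv}) is treated separately, since $C_{1,\cdot}$ is supported on the single index $1$.
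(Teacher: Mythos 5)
Your proposal follows the paper's proof essentially step for step: induction (on $q$ rather than on $k$, which is equivalent here since the transform drops $q$ by one), Lemma~\ref{transform} to rewrite both sides as triple sums, Lemma~\ref{consecutive} to make the inner index sets intervals, and Lemma~\ref{equiv}(b) applied via the inductive hypothesis, with everything reducing to the same combinatorial bound $\min S_{j,\ell,r}+\max S_{j,\ell,r}\ls m+r-j-1$. The only divergence is in that final bound, where you prove the slightly stronger $\min S_{j,\ell,r}+\max S_{j,\ell,r}\ls r$ (for $j<r$) via a reflection inequality $f(r-i)\gs f(i)$ instead of the paper's two-case analysis; both routes work, and your explicit treatment of the boundary issues ($j\gs r$, non-positive subscripts, the degenerate $k_j=1$ case) is if anything more careful than the paper's.
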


\begin{proof} We prove the result  by induction on $ k $. If $ k\ls m $,  then $\ceil{k/m}=1$ and we know that the proposition holds
by Lemma \ref{easy_lemma} (\ref{easy_property4}). Assume $k>m $. Let $ k = (q - 1)m + r $, where $q=\ceil{k/m}$ and $ 1 \ls r \ls m $. By Lemmas \ref{transform}, it suffices to prove the following inequality.
\begin{align}
\sum_{i \in S_{j, \ell, r}}C_{(q - 2)m + j, q + d - i - 1} \gs \sum_{i \in S_{j, \ell, r}}C_{(q - 2)m + j, k - d - i - 1}.\label{ssubs1}
\end{align}
\noindent By Lemma \ref{consecutive}, we assume

\[S_{j, \ell, r} = \cb{\min S_{j, \ell, r}, \min S_{j, \ell, r} + 1, \ldots, \max S_{j, \ell, r}}.\]

\noindent Let $ S_1 = \cb{q + d - i - 1: i \in S_{j, \ell, r}} $ and $ S_2 = \cb{k - d - i - 1: i \in S_{j, \ell, r}} $. Then (\ref{ssubs1}) is equivalent to the following inequality.
\begin{align}
\sum_{i \in S_1}C_{(q - 2)m + j, i} \gs \sum_{i \in S_2}C_{(q - 2)m + j, i}.\label{ssubs2}
\end{align}
Let $k'={(q - 2)m + j}$, where $1\ls j\ls m$. Then $k'<k$ and $\ceil{k'/m}=q-1$. By inductive hypothesis, we have \[C_{k', \ceil{k'/m} + d'} \gs C_{k', k' - d'}\]
for any positive integer $ d' $ with $ 2d' \ls k' - \ceil{k'/m} $. Now we have $|S_1|=|S_2|$ and $$\min S_1=q + d - \max S_{j, \ell, r} - 1\ls k - d - \max S_{j, \ell, r} - 1=\min S_2$$ by $ 2d \ls k - \ceil{k/m} $. If we have $\min S_1+\max S_2\gs k'+\ceil{k'/m}$, then (\ref{ssubs2}) holds by Lemma \ref{equiv} and we finish the proof.

Recall $ k = (q - 1)m + r $ and $q=\ceil{k/m}$. Also
 $\min S_1=q + d - \max S_{j, \ell, r} - 1$ and $\max S_2=k - d - \min S_{j, \ell, r} - 1$. If we have $\min S_{j, \ell, r} + \max S_{j, \ell, r} \ls m + r - j - 1$, then $\min S_1+\max S_2\gs k'+\ceil{k'/m}=k'+(q-1)$ holds.
Now we are going to prove
\begin{align}
\min S_{j, \ell, r} + \max S_{j, \ell, r} \ls m + r - j - 1.\label{ssubs3}
\end{align}


\begin{enumerate}
\item If $ r - j - 1 \gs \min S_{j, \ell, r} $, combining with $ m \gs r - 1 \gs \max S_{j, \ell, r} $, then (\ref{ssubs3}) holds.

\item If $ r - j - 1 < \min S_{j, \ell, r} $, then

\[S_{j, \ell, r} = \cb{0 \ls i \ls r - 1: \binom{r - 1}{i} \gs \ell}.\]

 By the symmetry of binomial coefficient, we have $ \min S_{j, \ell, r} + \max S_{j, \ell, r} = r - 1 $, then (\ref{ssubs3}) also holds.\qedhere
\end{enumerate}
\end{proof}

Given positive integers $ k$ and $ m $, recall again $ q := \ceil{k/m} $. By Proposition \ref{weak_spirality} and the same proof of Lemma \ref{equiv}, we have the following corollary.
\begin{cor}
	
Let $ k, m $ be positive integers with $ k, m \gs 2 $. Then $ \alpha(k) \ls \frac{1}{2}\pt{k + q} $.	

\end{cor}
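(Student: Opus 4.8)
The plan is to run, under the now-unconditional hypothesis, exactly the domination argument that is already embedded in the proof of Lemma \ref{equiv}. Indeed, Proposition \ref{weak_spirality} asserts $C_{k, q + d} \gs C_{k, k - d}$ for every positive integer $d$ with $2d \ls k - q$, and the remaining boundary case $d = 0$ is trivial: $C_{k, k} = 1$ by Lemma \ref{easy_lemma} (\ref{easy_property3}), while $C_{k, q} \gs 1$ by Lemma \ref{easy_lemma} (\ref{easy_property1}), so $C_{k, q} \gs C_{k, k}$. Hence statement {\rm(a)} of Lemma \ref{equiv} holds for all integers $d$ with $0 \ls d \ls \frac{1}{2}(k - q)$, and this is the only input I need.

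Set $d_0 := \floor{\frac{1}{2}(k - q)}$. First I would show that the maximum of $\cb{C_{k, q}, \ldots, C_{k, k}}$ is already attained on the truncated range $\cb{C_{k, q}, \ldots, C_{k, q + d_0}}$. Take any index $i$ with $q + d_0 < i \ls k$ and write $i = k - d$, so that $d = k - i$ is a non-negative integer with $d < (k - q) - d_0 = \ceil{\frac{1}{2}(k - q)}$. A short check of the two parities of $k - q$ then forces both $2d \ls k - q$ and $d \ls d_0$. Applying statement {\rm(a)} gives $C_{k, i} = C_{k, k - d} \ls C_{k, q + d}$, where $q + d \ls q + d_0$. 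Thus every value $C_{k, i}$ with $i > q + d_0$ is dominated by some $C_{k, j}$ with $j \ls q + d_0$, so
\[\max\cb{C_{k, q}, \ldots, C_{k, k}} = \max\cb{C_{k, q}, \ldots, C_{k, q + d_0}}.\]

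Since $\alpha(k)$ is by definition the smallest index achieving this maximum, the displayed equality yields $\alpha(k) \ls q + d_0$. Finally, $q + d_0 = q + \floor{\frac{1}{2}(k - q)} \ls q + \frac{1}{2}(k - q) = \frac{1}{2}(k + q)$, which gives $\alpha(k) \ls \frac{1}{2}(k + q)$ as claimed. I do not expect any genuine obstacle here; the result is a direct corollary of Proposition \ref{weak_spirality}, and the only point demanding care is the floor/ceiling bookkeeping that must simultaneously certify $2d \ls k - q$ (so that Proposition \ref{weak_spirality} applies to the chosen $d$) and $q + d \ls q + d_0$ (so that the dominating index stays inside the desired window).
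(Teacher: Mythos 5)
Your proof is correct and follows essentially the same route as the paper: the paper derives this corollary by invoking Proposition \ref{weak_spirality} together with the step inside the proof of Lemma \ref{equiv} showing $\max\cb{C_{k,q},\ldots,C_{k,k}}=\max\cb{C_{k,q},\ldots,C_{k,q+d_0}}$ with $d_0=\floor{\tfrac{1}{2}(k-q)}$, hence $\alpha(k)\ls q+d_0\ls\tfrac{1}{2}(k+q)$. Your only addition is the explicit handling of the $d=0$ boundary case (via $C_{k,k}=1\ls C_{k,q}$) and the parity bookkeeping, both of which are correct and merely make explicit what the paper leaves implicit.
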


\begin{cor}
	
Let $ k, m $ be positive integers with $ k, m \gs 2 $. If $ n \gs k + q $ and $ q \ls \ell \ls \floor{\frac{n - 1}{2}} $, then $ C_{k, \ell} \gs C_{k, n - \ell} $.
	
\end{cor}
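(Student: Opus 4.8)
The plan is to derive this corollary as an immediate consequence of Lemma \ref{equiv}, using singleton index sets. First I would observe that Proposition \ref{weak_spirality} is precisely statement {\rm(a)} of Lemma \ref{equiv}: for every integer $d$ with $0 \ls d \ls \frac{1}{2}(k - q)$ one has $C_{k, q + d} \gs C_{k, k - d}$ (the case $d = 0$ being immediate since $C_{k, k} = 1 \ls C_{k, q}$ by Lemma \ref{easy_lemma} {\rm(\romannumeral1)} and {\rm(\romannumeral3)}). Since Lemma \ref{equiv} asserts the equivalence of {\rm(a)} and {\rm(b)} under the standing hypothesis $k, m \gs 2$, statement {\rm(b)} therefore holds as well.

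Next I would apply statement {\rm(b)} with the two singletons $S_1 = \{\ell\}$ and $S_2 = \{n - \ell\}$, both of which are trivially sets of consecutive positive integers (here $\ell \gs q \gs 1$ and $n - \ell \gs 1$). The three hypotheses of {\rm(b)} are then checked directly: the cardinalities agree since $\abs{S_1} = \abs{S_2} = 1$; the sum condition reads $\min S_1 + \max S_2 = \ell + (n - \ell) = n \gs k + q$, which is exactly the assumption $n \gs k + q$; and the ordering condition $\min S_1 \ls \min S_2$ reads $\ell \ls n - \ell$, which follows from $\ell \ls \floor{\frac{n-1}{2}}$ since this gives $2\ell \ls n - 1 < n$. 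Statement {\rm(b)} then yields $C_{k, \ell} = \sum_{i \in S_1} C_{k, i} \gs \sum_{i \in S_2} C_{k, i} = C_{k, n - \ell}$, as desired.

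There is no genuine obstacle here: all the real content sits in the spirality inequality (Proposition \ref{weak_spirality}) and in the summation-by-parts argument already packaged inside Lemma \ref{equiv}, so the corollary requires no new computation. The only points worth a brief check are the boundary cases — for instance when $n - \ell > k$, so that $C_{k, n - \ell} = 0$ — but these are absorbed automatically by the inequality in {\rm(b)}, whose proof never assumes that the indices lie in the support $[q, k]$ of $C_{k, \cdot}$. Thus the corollary follows at once.
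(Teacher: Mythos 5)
Your proof is correct, and it reaches the conclusion by a slightly different (and arguably cleaner) route than the paper. The paper proves the corollary directly: it first records $\alpha(k) \ls \frac{1}{2}\pt{k + q}$ (Corollary 2.7) and then splits into the two cases $q \ls \ell \ls \frac{1}{2}(k+q)$ and $\frac{1}{2}(k+q) < \ell \ls \floor{\frac{n-1}{2}}$, using Proposition \ref{weak_spirality} plus unimodality (Proposition \ref{unimodality}) in the first case and unimodality alone in the second. You instead observe that statement (a) of Lemma \ref{equiv} holds --- Proposition \ref{weak_spirality} for $d \gs 1$, and the $d = 0$ case from $C_{k,k} = 1 \ls C_{k,q}$ via Lemma \ref{easy_lemma} (\ref{easy_property1}) and (\ref{easy_property3}) --- so statement (b) holds, and the corollary is the singleton instance $S_1 = \cb{\ell}$, $S_2 = \cb{n - \ell}$. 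Your verification of the three hypotheses of (b) is right: the sum condition is exactly $n \gs k + q$, and $\ell \ls \floor{\frac{n-1}{2}}$ gives $\ell < n - \ell$. The two arguments are the same mathematics in the end, since unwinding the proof of (a) $\Rightarrow$ (b) in Lemma \ref{equiv} for singletons reproduces precisely the paper's two-case analysis; what your route buys is that no computation is repeated, at the cost of relying on Lemma \ref{equiv} being stated for arbitrary consecutive blocks (including singletons), which it is. Your attention to the $d = 0$ gap between Proposition \ref{weak_spirality} (stated for positive $d$) and Lemma \ref{equiv}(a) (which needs $d = 0$ as well) is a genuine point that the paper glosses over, and you close it correctly.
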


\begin{proof}
By Corollary 2.7, 	$ \alpha(k) \ls \frac{1}{2}\pt{k + q} $.
We prove the result by considering the following two cases.
	
\begin{enumerate}
\item $ q \ls \ell \ls \frac{1}{2}(k + q) $. Let $d=\ell-q$. Then $ 0\ls d \ls \frac{1}{2}\pt{k - q} $. By Proposition \ref{weak_spirality}, we have $C_{k, \ell} \gs C_{k, k + q - \ell}$. Since $k+q-\ell\gs \frac{1}{2}(k + q)\gs \alpha(k)$ and $k+q-\ell\ls n - \ell$, by Proposition \ref{unimodality}, we have $C_{k, k + q - \ell} \gs C_{k, n - \ell}$. Thus $C_{k, \ell} \gs C_{k, n - \ell}.$
		
		
\item $ \frac{1}{2}(k + q) < \ell \ls \floor{\frac{n - 1}{2}} $. Note that $ \alpha(k) \ls \frac{1}{2}(k + q) $ and $ n - \ell > \ell $. By Proposition \ref{unimodality}, we have
$C_{k, \ell} \gs C_{k, n - \ell}.$
\end{enumerate}
	
\end{proof}

\section{Proof of Theorem \ref{main_thm}}\label{proof}

For a set $ S $, denote $ \sP(S) $ as the whole family of non-empty proper subsets of $ S $, that is, $ \sP(S) = 2^S \setminus \{S, \emptyset\} $, where $ 2^S := \cb{T: T \subseteq S} $. For a member $ B \in \sP([n]) $, denote $ B^c := [n] \setminus B $. For a family $ \sB \subseteq \sP([n]) $, denote $ \sB^c := \cb{B^c: B \in \sB} $. For an integer $ \ell $ with $ \ell \in [n - 1] $, denote $ \sB(\ell) := \cb{B \in \sB: \abs{B} = \ell} $.

An intersecting familiy $ \sA \subseteq \binom{[n]_m}{k} $ (resp. $ \sB \subseteq \sP([n]) $) is called \emph{\textbf{maximal}} if for any $ A \in \binom{[n]_m}{k} \setminus \sA $ (resp. $ B \in \sP([n]) \setminus \sB $), we have $ \sA \cup \{A\} $ (resp. $ \sB \cup \{B\} $) is no longer intersecting.

\begin{lem}\label{lem31} Let $ \sB \subseteq \sP\pt{[n]} $
be a maximal intersecting family and  $ B \in \sP\pt{[n]} $. Then we have $ \abs{\cb{B, B^c} \cap \sB} = 1 $.

\end{lem}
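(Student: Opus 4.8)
The plan is to prove the two inequalities $\abs{\cb{B, B^c} \cap \sB} \ls 1$ and $\abs{\cb{B, B^c} \cap \sB} \gs 1$ separately. First note that since $B \in \sP([n])$ is a non-empty proper subset of $[n]$, its complement $B^c = [n] \setminus B$ is also a non-empty proper subset, so both $B$ and $B^c$ genuinely belong to $\sP([n])$ and are candidates for membership in $\sB$.

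For the upper bound, I would observe that $B \cap B^c = \emptyset$. Since $\sB$ is an intersecting family, it cannot contain two disjoint members; hence $B$ and $B^c$ cannot both lie in $\sB$, giving $\abs{\cb{B, B^c} \cap \sB} \ls 1$.

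For the lower bound, I would argue by contradiction using maximality twice. Suppose neither $B$ nor $B^c$ is in $\sB$. Because $\sB$ is maximal and $B \notin \sB$, the family $\sB \cup \{B\}$ fails to be intersecting, so there exists some $C \in \sB$ with $C \cap B = \emptyset$, i.e. $C \subseteq B^c$. Symmetrically, since $B^c \notin \sB$, there exists some $D \in \sB$ with $D \cap B^c = \emptyset$, i.e. $D \subseteq B$. Then $C \cap D \subseteq B^c \cap B = \emptyset$, contradicting the fact that $C, D \in \sB$ and $\sB$ is intersecting. Therefore at least one of $B, B^c$ lies in $\sB$, so $\abs{\cb{B, B^c} \cap \sB} \gs 1$.

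Combining the two bounds yields $\abs{\cb{B, B^c} \cap \sB} = 1$. There is no real obstacle here; the only point requiring a little care is ensuring that $B^c$ is itself a legitimate element of $\sP([n])$ (which is why we need $B$ to be a \emph{non-empty proper} subset), and recognizing that maximality must be invoked in \emph{both} directions — once to produce a witness disjoint from $B$ and once to produce a witness disjoint from $B^c$ — so that the two witnesses can be played against each other.
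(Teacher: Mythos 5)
Your proof is correct and follows essentially the same argument as the paper: the upper bound comes from $B\cap B^c=\emptyset$ and $\sB$ being intersecting, and the lower bound comes from invoking maximality for both $B$ and $B^c$ to produce two disjoint witnesses in $\sB$, a contradiction. No issues.
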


\begin{proof}
For any $ B \in \sP([n]) $, we have $ \abs{\cb{B, B^c} \cap \sB} \ls 1 $, by $ \sB $ being intersecting. Suppose $ \abs{\cb{B, B^c} \cap \sB} = 0 $. Since $ \sB \subseteq \sP([n]) $ is maximal intersecting, there are $ B_1, B_2 \in \sB $, such that $ B_1 \cap B = B_2 \cap B^c = \emptyset $, which implies that $ B_1 \subseteq B^c $ and $ B_2 \subseteq B $. Thus $ B_1 \cap B_2 = \emptyset $, a contradiction. Hence $ \abs{\cb{B, B^c} \cap \sB} = 1 $.
\end{proof}

\begin{cor}\label{cor32} Let $ \sB \subseteq \sP\pt{[n]} $ be
a maximal intersecting family and  $ B \in \sB $. Then for any $ B' \in \sP([n]) $ with $ B \subseteq B'$, we have  $ B' \in \sB $.

\end{cor}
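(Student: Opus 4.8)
The plan is to derive this directly from Lemma~\ref{lem31}, which provides a clean dichotomy: for every member of $\sP([n])$, exactly one of it and its complement lies in a maximal intersecting family. The natural strategy is a short proof by contradiction that exploits the inclusion $B \subseteq B'$ together with the intersecting property.

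Concretely, I would argue as follows. Since $B' \in \sP([n])$, Lemma~\ref{lem31} gives $\abs{\cb{B', (B')^c} \cap \sB} = 1$, so exactly one of $B'$ and $(B')^c$ belongs to $\sB$. Suppose for contradiction that $B' \notin \sB$; then necessarily $(B')^c \in \sB$. The key observation is that taking complements reverses the inclusion: from $B \subseteq B'$ we get $(B')^c \subseteq B^c$. Hence $B \cap (B')^c \subseteq B \cap B^c = \emptyset$, so $B$ and $(B')^c$ are disjoint. But $B \in \sB$ and $(B')^c \in \sB$ are both members of the intersecting family $\sB$, so they must meet, a contradiction. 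Therefore $B' \in \sB$.

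There is essentially no obstacle here; the only thing to be careful about is that $B'$ genuinely lies in $\sP([n])$ (i.e.\ is a non-empty proper subset) so that Lemma~\ref{lem31} applies, and this is guaranteed by the hypothesis $B' \in \sP([n])$. The whole argument is a two-line consequence of Lemma~\ref{lem31} and the fact that complementation is inclusion-reversing, so I would present it compactly rather than belabor it.
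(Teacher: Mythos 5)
Your argument is correct and is essentially identical to the paper's proof: both suppose $B' \notin \sB$, invoke Lemma~\ref{lem31} to conclude $(B')^c \in \sB$, and derive a contradiction from $B \cap (B')^c = \emptyset$ and the intersecting property. No issues.
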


\begin{proof}
Suppose $ B \subseteq B_0 $ but $ B_0 \notin \sB $. By Lemma \ref{lem31}, we have $ B_0^c \in \sB $. Note that $ B_0^c \cap B = \emptyset $, which is a contradiction with $ \sB $ being intersecting.
\end{proof}


Let $ k, m, n $ be positive integers with $ k, m \gs 2 $ and $n\ge k+q$, where $ q := \ceil{k/m} $. Then $ n \gs k + q > k \gs 2 $. Thus for any $ A \in \binom{[n]_m}{k} $, we have $ \emptyset \ne A \cap [n] \subsetneqq [n] $. For any $ A \in \binom{[n]_m}{k} $, define $ \varphi(A) := A \cap [n] \in \sP([n]) $. Let $ \sA \subseteq \binom{[n]_m}{k} $. Denote $\varphi(\sA) := \cb{\varphi(A): A \in \sA} $.
By Lemma \ref{lem31}, for any maximal intersecting family $ \sA \subseteq \binom{[n]_m}{k} $, there is a maximal intersecting family $ \sB_{\sA} \subseteq \sP([n]) $, such that $ \sB_{\sA} \supseteq \varphi(\sA) $. Denote $\varphi^{-1}(\sB_{\sA}) := \cb{A \in \binom{[n]_m}{k}: \varphi(A) \in \sB} $.

\begin{lem}\label{lem310} Let $ \sA \subseteq \binom{[n]_m}{k} $
be a maximal intersecting family. Then we have $ \sA = \varphi^{-1}(\sB_{\sA}) $.

\end{lem}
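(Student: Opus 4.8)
The plan is to establish the two inclusions $\sA \subseteq \varphi^{-1}(\sB_{\sA})$ and $\varphi^{-1}(\sB_{\sA}) \subseteq \sA$ separately. The first is immediate from the definitions: if $A \in \sA$, then $\varphi(A) \in \varphi(\sA) \subseteq \sB_{\sA}$, and hence $A \in \varphi^{-1}(\sB_{\sA})$.

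The content lies in the reverse inclusion. First I would record the elementary but crucial observation that two $k$-multisets $A, A' \in \binom{[n]_m}{k}$ satisfy $A \cap A' \ne \emptyset$ if and only if $\varphi(A) \cap \varphi(A') \ne \emptyset$: a symbol $i$ appears in the multiset intersection $A \cap A'$ precisely when it has positive multiplicity in both $A$ and $A'$, that is, precisely when $i \in \varphi(A) \cap \varphi(A')$. Thus intersection of $k$-multisets is detected entirely by intersection of their underlying supports in $\sP([n])$.

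Now take any $A \in \varphi^{-1}(\sB_{\sA})$, so that $\varphi(A) \in \sB_{\sA}$, and let $A'$ be an arbitrary member of $\sA$. Then $\varphi(A') \in \varphi(\sA) \subseteq \sB_{\sA}$, and since $\sB_{\sA}$ is intersecting we obtain $\varphi(A) \cap \varphi(A') \ne \emptyset$; by the observation above this yields $A \cap A' \ne \emptyset$. As $A'$ was arbitrary, $A$ meets every member of $\sA$, so $\sA \cup \{A\}$ is again intersecting. The maximality of $\sA$ then forces $A \in \sA$, which gives $\varphi^{-1}(\sB_{\sA}) \subseteq \sA$ and completes the proof.

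The argument is short, and I do not expect a genuine obstacle. The only points requiring care are the reduction of multiset intersection to support intersection, and the correct invocation of maximality, namely that any $k$-multiset meeting every member of a maximal intersecting family must itself already belong to that family.
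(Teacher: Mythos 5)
Your proof is correct and relies on the same two ingredients as the paper's: that multiset intersection is detected by intersection of supports under $\varphi$, and that maximality of $\sA$ forces membership of any multiset meeting all of $\sA$. The paper reaches the same conclusion by splitting $\sB_{\sA}$ into $\varphi(\sA)$ and $\sB_{\sA}\setminus\varphi(\sA)$ and arguing the latter has empty preimage by contradiction, whereas your direct argument handles all of $\varphi^{-1}(\sB_{\sA})$ at once; this is only a cosmetic difference.
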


\begin{proof}
\begin{enumerate}
	\item If $ A \in \binom{[n]_m}{k} $ satisfying $ \varphi(A) \in \varphi(\sA) $, then $ A \in \sA $ since $ \sA $ is maximal intersecting. Hence $ \varphi^{-1}\pt{\varphi(\sA)} \subseteq \sA $. Note that $ \sA \subseteq \varphi^{-1}\pt{\varphi(\sA)} $. Thus we have $ \varphi^{-1}\pt{\varphi(\sA)} = \sA $.
	
	\item Suppose $ \varphi^{-1}(\sB_{\sA} \setminus \varphi(\sA)) \ne \emptyset $, say $ X \in \varphi^{-1}(\sB_{\sA} \setminus \varphi(\sA)) $. Then $ \varphi(X) \in \sB_{\sA} $ but $ X \notin \sA $. Since $ \sA $ is maximal intersecting, there is $ Y \in \sA $ such that $ X \cap Y = \emptyset $. Then $ \varphi(X), \varphi(Y) \in \sB_{\sA} $ and $ \varphi(X) \cap \varphi(Y) = \emptyset $,  a contradiction with $ \sB_{\sA} $ being intersecting. Thus we have $ \varphi^{-1}(\sB_{\sA} \setminus \varphi(\sA)) = \emptyset $.
	
	\item Finally, we have
	
	\[\varphi^{-1}(\sB_{\sA}) = \varphi^{-1}\pt{\varphi(\sA) \cup \pt{\sB_{\sA} \setminus \varphi(\sA)}} = \varphi^{-1}(\varphi(\sA)) \cup \varphi^{-1}(\sB_{\sA} \setminus \varphi(\sA)) = \sA.\]
\end{enumerate}
\end{proof}

\begin{lem}\label{lem311} Let $ \sA \subseteq \binom{[n]_m}{k} $
be a maximal intersecting family. Then we have $ \abs{\varphi^{-1}(\sB_{\sA})} = \sum_{\ell = q}^{n - q}C_{k, \ell} \cdot \abs{\sB_{\sA}(\ell)} $.

\end{lem}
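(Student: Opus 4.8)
The plan is to evaluate $\abs{\varphi^{-1}(\sB_{\sA})}$ by partitioning the preimage according to the value of the support map $\varphi(A) = A \cap [n]$. Since each $A \in \binom{[n]_m}{k}$ has a single well-defined support $\varphi(A) \in \sP([n])$, the fibers $\cb{A \in \binom{[n]_m}{k}: \varphi(A) = B}$ over distinct $B$ are pairwise disjoint, and
\[\varphi^{-1}(\sB_{\sA}) = \bigsqcup_{B \in \sB_{\sA}} \cb{A \in \binom{[n]_m}{k}: \varphi(A) = B}.\]
Hence it suffices to count each fiber and sum over $B \in \sB_{\sA}$.

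The key step is to show that for a fixed $B \in \sP([n])$ with $\abs{B} = \ell$, the number of $k$-multisets $A$ of $[n]_m$ with support exactly $B$ equals $C_{k, \ell}$. Indeed, such an $A$ is determined by assigning to each element of $B$ a multiplicity between $1$ and $m$ with the multiplicities summing to $k$, while every element outside $B$ receives multiplicity $0$. Writing $B = \cb{b_1, \ldots, b_\ell}$, these assignments correspond exactly to the tuples $(m_1, \ldots, m_\ell) \in [m]^\ell$ with $m_1 + \cdots + m_\ell = k$, whose number is $C_{k, \ell}$ by the combinatorial interpretation of $C_{k, \ell}$ recorded in the proof of Lemma~\ref{easy_lemma}. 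The crucial point is that this count depends only on $\ell = \abs{B}$, not on the particular set $B$.

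Grouping the fibers by $\ell = \abs{B}$ then gives
\[\abs{\varphi^{-1}(\sB_{\sA})} = \sum_{B \in \sB_{\sA}} C_{k, \abs{B}} = \sum_{\ell = 1}^{n - 1} \abs{\sB_{\sA}(\ell)} \cdot C_{k, \ell}.\]
To reduce the range of summation to $q \ls \ell \ls n - q$, I would invoke Lemma~\ref{easy_lemma}(\ref{easy_property1}): $C_{k, \ell} \ne 0$ forces $q \ls \ell \ls k$. Since the hypothesis $n \gs k + q$ yields $k \ls n - q$, every index $\ell$ lying outside $[q, n - q]$ satisfies either $\ell < q$ or $\ell > k$, so $C_{k, \ell} = 0$ there and contributes nothing. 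Discarding these vanishing terms leaves precisely $\sum_{\ell = q}^{n - q} C_{k, \ell} \cdot \abs{\sB_{\sA}(\ell)}$, as claimed.

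I do not expect a genuine obstacle in this lemma: once the bijection between support-$B$ multisets and bounded-multiplicity tuples is identified, the remainder is bookkeeping. The only point that requires a little care is the range reduction, where one must remember that the symmetric window $[q, n - q]$ is legitimate exactly because $n \gs k + q$ guarantees that all possibly-nonzero terms, those with $q \ls \ell \ls k$, already lie inside it.
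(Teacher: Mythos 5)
Your proof is correct and follows essentially the same route as the paper: both decompose $\varphi^{-1}(\sB_{\sA})$ into fibers, identify the fiber over a set $B$ of size $\ell$ with the tuples $(m_1,\ldots,m_\ell)\in[m]^\ell$ summing to $k$ (hence of size $C_{k,\ell}$), and then truncate the sum to $q\ls\ell\ls n-q$ via Lemma~\ref{easy_lemma}(\ref{easy_property1}) together with $n\gs k+q$. No differences worth noting.
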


\begin{proof}
\noindent By $ n \gs k + q $ and Lemma \ref{easy_lemma} (\ref{easy_property1}), we have $ C_{k, \ell} = 0 $ for $ \ell < q $ or $ \ell > n - q $. Thus
\begin{align*}
\abs{\varphi^{-1}(\sB_{\sA})}{}={}& \sum_{\ell = 1}^{n - 1} \abs{\cb{A \in \binom{[n]_m}{k}: \varphi(A) \in \sB_{\sA}(\ell)}} \\
={}& \sum_{\ell = 1}^{n - 1} \abs{\cb{(m_1, \cdots, m_\ell) \in [m]^\ell: m_1 + \cdots + m_\ell = k}} \cdot \abs{\sB_{\sA}(\ell)} \\
={}&\sum_{\ell = q}^{n - q}C_{k, \ell} \cdot \abs{\sB_{\sA}(\ell)}.
\end{align*}
\end{proof}

Define

\[\sU := \cb{B \in \sP([n]): 1 \in B}.\]

\noindent Then $ \sF^{(m)}_{0, 1} = \varphi^{-1}(\sU) $ by Lemma \ref{lem310}. Also, when $ q \ls \ell \ls \floor{\frac{n - 1}{2}} $, $ \sU(\ell) $ is the maximum intersecting family in $ \binom{[n]}{\ell} $ by the Erd\H{o}s-Ko-Rado Theorem.

\begin{lem}\label{lem312} Let $ \sA \subseteq \binom{[n]_m}{k} $
be a maximal intersecting family. Then we have $ |\varphi^{-1}(\sU)|\gs |\varphi^{-1}(\sB_{\sA})| $.

\end{lem}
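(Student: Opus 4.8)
The plan is to collapse both sides to the level-by-level counting from Lemma~\ref{lem311} and then compare the star family $\sU$ with $\sB_{\sA}$ one complementary pair of levels at a time. The counting identity of Lemma~\ref{lem311} applies verbatim to $\sU$ as well (its proof only uses the combinatorial interpretation of $C_{k,\ell}$), so it suffices to prove
\[
\sum_{\ell = q}^{n - q} C_{k, \ell}\, \abs{\sU(\ell)} \gs \sum_{\ell = q}^{n - q} C_{k, \ell}\, \abs{\sB_{\sA}(\ell)}.
\]

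First I would record the two structural facts that drive the comparison. Since $\sB_{\sA}$ is maximal intersecting, Lemma~\ref{lem31} pairs each $B \in \binom{[n]}{\ell}$ with its complement $B^c \in \binom{[n]}{n - \ell}$ and puts exactly one of the two into $\sB_{\sA}$; hence $\abs{\sB_{\sA}(\ell)} + \abs{\sB_{\sA}(n - \ell)} = \binom{n}{\ell}$ for every $\ell$. The star $\sU$ satisfies the same relation directly, since $\abs{\sU(\ell)} = \binom{n - 1}{\ell - 1}$ and $\binom{n - 1}{\ell - 1} + \binom{n - 1}{n - \ell - 1} = \binom{n}{\ell}$. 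Next I would group both sums into complementary pairs $\cb{\ell, n - \ell}$ with $q \ls \ell \ls \floor{\frac{n - 1}{2}}$, together with the self-paired middle level $\ell = n/2$ when $n$ is even; note the pairing stays inside the window $[q, n - q]$ because $\ell \gs q$ is equivalent to $n - \ell \ls n - q$. Writing $a_\ell := \abs{\sU(\ell)}$ and $c_\ell := \abs{\sB_{\sA}(\ell)}$ and using the two complement identities to eliminate the level $n - \ell$ terms, each pair contributes exactly
\[
\pt{C_{k, \ell} - C_{k, n - \ell}}\pt{a_\ell - c_\ell}
\]
to the difference of the two sums, while the even-$n$ middle level contributes $0$ because both families occupy exactly half of $\binom{[n]}{n/2}$.

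It then remains to check that each product is nonnegative. For $q \ls \ell \ls \floor{\frac{n - 1}{2}}$ we have $C_{k, \ell} \gs C_{k, n - \ell}$ by Corollary~2.8 (the symmetry bound assembled from Propositions~\ref{weak_spirality} and \ref{unimodality}), so the first factor is $\gs 0$. For the second factor, $\sB_{\sA}(\ell)$ is an intersecting family in $\binom{[n]}{\ell}$ and $n > 2\ell$ (since $2\ell \ls n - 1$), so the Erd\H{o}s--Ko--Rado Theorem gives $\abs{\sB_{\sA}(\ell)} \ls \binom{n - 1}{\ell - 1} = \abs{\sU(\ell)}$, i.e. $a_\ell - c_\ell \gs 0$ (the case $\ell = 1$ being trivial, as a maximal intersecting family contains at most one singleton). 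Summing over all pairs yields $\abs{\varphi^{-1}(\sU)} \gs \abs{\varphi^{-1}(\sB_{\sA})}$.

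The genuine content is entirely front-loaded into Corollary~2.8 and the Erd\H{o}s--Ko--Rado Theorem; the only delicate part of the argument is the bookkeeping, namely the two-term algebraic cancellation that collapses each complementary pair to $\pt{C_{k,\ell} - C_{k,n-\ell}}\pt{a_\ell - c_\ell}$ and the correct separation of the even-$n$ middle term. Once that per-pair identity is in place, positivity of each summand is immediate and no further estimation is needed. I expect the main obstacle to be purely presentational: keeping the parity cases and the index window $[q, n-q]$ consistent while performing the summation-by-pairs.
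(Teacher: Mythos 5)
Your proposal is correct and follows essentially the same route as the paper: the paper also reduces to the level sums of Lemma~\ref{lem311}, uses the complementation from Lemma~\ref{lem31} to pair levels $\ell$ and $n-\ell$ (writing the per-pair contribution as $\pt{C_{k,\ell}-C_{k,n-\ell}}\pt{\abs{\sD_{\sA}(\ell)}-\abs{\sD_{\sA}(n-\ell)}}$, which equals your $\pt{C_{k,\ell}-C_{k,n-\ell}}\pt{a_\ell-c_\ell}$), and concludes by Corollary~2.8 together with the Erd\H{o}s--Ko--Rado bound $\abs{\sB_{\sA}(\ell)}\ls\abs{\sU(\ell)}$. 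Your bookkeeping of the middle level and of the $\ell=1$ case is consistent with (and slightly more explicit than) the paper's argument.
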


\begin{proof}
We denote $ \sD_{\sA} := \sU \setminus \sB_{\sA} $. Then we have $ \sB_{\sA} = (\sU \setminus \sD_{\sA}) \cup \sD_{\sA}^c $ by Lemma \ref{lem31}.  Note that
\[\abs{\sU(\ell)} \gs \abs{\sB_{\sA}(\ell)} =\abs{\sU(\ell)} - \abs{\sD_{\sA}(\ell)} + \abs{\sD_{\sA}^c(\ell)} = \abs{\sU(\ell)} - \abs{\sD_{\sA}(\ell)} + \abs{\sD_{\sA}(n - \ell)} \]

\noindent which implies $ \abs{\sD_{\sA}(\ell)} \gs \abs{\sD_{\sA}(n - \ell)} $. By Corollary 2.8, $C_{k, \ell} \gs C_{k, n - \ell}$ for $q\ls \ell \ls \floor{\frac{n - 1}{2}}$. So
by Lemma \ref{lem311}, we have
\begin{align*}
&\abs{\varphi^{-1}(\sU)} - \abs{\varphi^{-1}(\sB_{\sA})}\\
={}&\sum_{\ell = q}^{n - q}C_{k, \ell} \cdot \pt{\abs{\sU(\ell)} - \abs{\sB_{\sA}(\ell)}} \\
={}&\sum_{\ell = q}^{n - q} C_{k, \ell}\cdot  \pt{\abs{\sD_{\sA}(\ell)} - \abs{\sD_{\sA}(n - \ell)}} \\
={}&\sum_{\ell = q}^{\floor{\frac{n - 1}{2}}} C_{k, \ell} \cdot \pt{\abs{\sD_{\sA}(\ell)} - \abs{\sD_{\sA}(n - \ell)}} + \sum_{\ell = \ceil{\frac{n + 1}{2}}}^{n - q} C_{k, \ell} \cdot \pt{\abs{\sD_{\sA}(\ell)} - \abs{\sD_{\sA}(n - \ell)}} \\
={}&\sum_{\ell = q}^{\floor{\frac{n - 1}{2}}}  \pt{C_{k, \ell} - C_{k, n - \ell}} \cdot \pt{\abs{\sD_{\sA}(\ell)} - \abs{\sD_{\sA}(n - \ell)}} \\
\gs{}&0.
\end{align*}
\end{proof}

\begin{lem}\label{lemma_the_l} Let $ \sA \subseteq \binom{[n]_m}{k} $
be a maximal intersecting family.
If there is an integer $ \ell_0 $ with $ q \ls \ell_0 \ls \floor{\frac{n - 1}{2}} $ such that $ \abs{\sD_{\sA}(\ell_0)} = \abs{\sD_{\sA}(n - \ell_0)} $, then for any positive integer $ \ell $ with $ \ell_0 \ls \ell \ls n - 1 $, we have $ \abs{\sD_{\sA}(\ell)} = \abs{\sD_{\sA}(n - \ell)} $.	
\end{lem}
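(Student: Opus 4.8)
The plan is to collapse everything into the single quantity $g(\ell):=\abs{\sU(\ell)}-\abs{\sB_{\sA}(\ell)}$. First I would observe that the decomposition $\sB_{\sA}=(\sU\setminus\sD_{\sA})\cup\sD_{\sA}^{c}$ coming from Lemma~\ref{lem31} holds level by level, which gives $g(\ell)=\abs{\sD_{\sA}(\ell)}-\abs{\sD_{\sA}(n-\ell)}$ for every $1\ls\ell\ls n-1$; moreover $\abs{\sB_{\sA}(\ell)}+\abs{\sB_{\sA}(n-\ell)}=\binom{n}{\ell}$ (again by Lemma~\ref{lem31}) yields the antisymmetry $g(n-\ell)=-g(\ell)$. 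Thus $\abs{\sD_{\sA}(\ell)}=\abs{\sD_{\sA}(n-\ell)}$ is literally $g(\ell)=0$, and by antisymmetry the asserted conclusion over the whole range $\ell_0\ls\ell\ls n-1$ is equivalent to $g\equiv 0$ on $[1,n-1]$. So the real content is that $g(\ell_0)=0$ forces $g$ to vanish identically.

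The hypothesis $g(\ell_0)=0$ says $\abs{\sB_{\sA}(\ell_0)}=\abs{\sU(\ell_0)}=\binom{n-1}{\ell_0-1}$, so $\sB_{\sA}(\ell_0)$ is a maximum intersecting family in $\binom{[n]}{\ell_0}$; since $\ell_0\ls\floor{\frac{n-1}{2}}$ gives $n>2\ell_0$, the uniqueness part of the Erd\H{o}s-Ko-Rado Theorem makes it the star $\cb{B:\abs{B}=\ell_0,\ j_0\in B}$ for some fixed $j_0$. Next I would push this upward. Because $\sB_{\sA}$ is upward closed (Corollary~\ref{cor32}), $\sB_{\sA}(\ell+1)$ contains the upper shadow of $\sB_{\sA}(\ell)$, and the upper shadow of a star at $j_0$ is again the star at $j_0$ one level higher; so $\sB_{\sA}(\ell+1)\supseteq\cb{B:\abs{B}=\ell+1,\ j_0\in B}$, while the Erd\H{o}s-Ko-Rado bound (valid while $\ell+1\ls\floor{\frac{n-1}{2}}$) forces equality. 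Iterating gives $\sB_{\sA}(\ell)=$ star at $j_0$ for $\ell_0\ls\ell\ls\floor{\frac{n-1}{2}}$, and complementation through Lemma~\ref{lem31} extends this to $\ell_0\ls\ell\ls n-\ell_0$; hence $g(\ell)=0$ on $[\ell_0,n-\ell_0]$.

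What remains is the downward direction, $g(\ell)=0$ for $\ell<\ell_0$ (equivalently at the top levels $\ell>n-\ell_0$). The shadow argument only yields the inclusion $\sB_{\sA}(\ell)\subseteq$ star at $j_0$ for $\ell<\ell_0$, and no purely order-theoretic input can give the reverse: there are maximal intersecting filters on $\sP([n])$ that are stars on a middle band of levels yet fail to be stars at the bottom, so for them $g\not\equiv 0$. The extra ingredient has to be the maximality of $\sA$ itself, not merely that $\sB_{\sA}$ is a maximal intersecting filter. I would argue by downward induction: assuming the full star on $(\ell,n-\ell)$, suppose some $S$ with $j_0\in S$ and $q\ls\abs{S}=\ell<\ell_0$ is missing from $\sB_{\sA}$. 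Since $\abs{S}$ lies in the realizable range $[q,k]$, maximality of $\sA$ (via $\sA=\varphi^{-1}(\sB_{\sA})$, Lemma~\ref{lem310}) produces a realizable $T\in\sB_{\sA}$ with $q\ls\abs{T}\ls k$, $T\cap S=\emptyset$, and therefore $j_0\notin T$. Enlarging $T$ inside $\sB_{\sA}$ (upward closure) to a set $T'$ of size in the band $[\ell_0,n-\ell_0]$ while keeping $j_0\notin T'$ contradicts $\sB_{\sA}$ being the star at $j_0$ on that band. This fills in all realizable levels below $\ell_0$, and complementation then carries the full star to the remaining extreme levels, giving $g\equiv 0$.

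The hard part will be exactly this last step. The delicate point is the size bookkeeping when inflating $T$: one must land $\abs{T'}$ inside $[\ell_0,n-\ell_0]$, which is immediate when $\abs{T}\ls\ell_0$ but requires a further comparison of $\abs{T}\ls k$ against $n-\ell_0$ when $\abs{T}$ is large, and it becomes genuinely tight at $n=k+q$. This is the very threshold at which Theorem~\ref{main_thm} splits into cases {\rm(a)} and {\rm(b)}, so I expect the boundary instance to require the additional hypotheses $k>m$ and $m\nmid k$ (entering through an input such as Lemma~\ref{easy_lemma}(\ref{easy_property2})) in order to rule out the competing extremal configuration, whereas for $n>k+q$ there is enough slack for the inflation to succeed unconditionally.
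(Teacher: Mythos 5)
Your reduction to $g(\ell):=\abs{\sU(\ell)}-\abs{\sB_{\sA}(\ell)}=\abs{\sD_{\sA}(\ell)}-\abs{\sD_{\sA}(n-\ell)}$ and your treatment of the band $\ell_0\ls\ell\ls n-\ell_0$ (Erd\H{o}s--Ko--Rado uniqueness at level $\ell_0$, upward closure via Corollary~\ref{cor32}, complementation via Lemma~\ref{lem31}) is exactly the paper's argument --- indeed carried out more carefully than in the paper, which cites ``the Erd\H{o}s--Ko--Rado Theorem'' even at levels above $n/2$, where the complementation identity $\abs{\sB_{\sA}(\ell)}+\abs{\sB_{\sA}(n-\ell)}=\binom{n}{\ell}$ is what is actually doing the work.

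The ``downward direction'' you isolate as the remaining hard step is not merely hard: it is false, so no completion of your maximality argument exists. The construction $\sN$ in the paper's concluding Remark is a counterexample. Take $q=1$ (say $m=k=4$, $n=k+q=5$) and $\sN=(\sU\setminus\cb{\cb{1}})\cup\cb{[n]\setminus\cb{1}}$; then $\sA=\varphi^{-1}(\sN)$ is a maximal intersecting family with $\sB_{\sA}=\sN$ and $\sD_{\sA}=\cb{\cb{1}}$, and $\ell_0=2\ls\floor{\frac{n-1}{2}}$ satisfies the hypothesis $\abs{\sD_{\sA}(2)}=\abs{\sD_{\sA}(n-2)}=0$, yet $\abs{\sD_{\sA}(n-1)}=0\neq1=\abs{\sD_{\sA}(1)}$. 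Your proposed repair breaks exactly where you feared: the set $T$ produced by maximality can be $[n]\setminus\cb{j_0}$ itself, of size $n-1>n-\ell_0$, so it cannot be inflated into the band and no contradiction arises. The printed lemma therefore overclaims for $\ell>n-\ell_0$, and the paper's own proof silently asserts the unjustified reverse inclusion $\sB_{\sA}(\ell)\subseteq\cb{B'\in\binom{[n]}{\ell}:x\in B'}$ in that range; but only the range $\ell_0\ls\ell\ls n-\ell_0$ is ever used (in the proof of Theorem~\ref{main_thm} one needs only $\ell\ls L\ls\floor{\frac{n-1}{2}}$, and Corollary~\ref{cor} takes $\ell_0=q$, where only levels $q\ls\ell\ls n-q$ contribute to $\varphi^{-1}$). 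So you should stop after your second paragraph: what you prove there is correct, coincides with the paper, and is all the rest of the paper requires.
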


\begin{proof}
Note that $ n > 2\ell_0 $. Recall $\abs{\sB_{\sA}(\ell)}  = \abs{\sU(\ell)} - \abs{\sD_{\sA}(\ell)} + \abs{\sD_{\sA}(n - \ell)}$.
If $ \abs{\sD_{\sA}(\ell_0)} = \abs{\sD_{\sA}(n - \ell_0)} $, then $ \abs{\sB_{\sA}(\ell_0)} = \abs{\sU(\ell_0)} $. Thus $ \sB_{\sA}(\ell_0) $ is a trivial maximum intersecting family by the Erd\H{o}s-Ko-Rado Theorem. Without loss of generality, we may assume that

\[\sB_{\sA}(\ell_0) = \cb{B \in \binom{[n]}{\ell_0}: x \in B}\]

\noindent for some fixed $ x \in [n] $. Suppose $ \ell $ is a positive integer with $ \ell_0 \ls \ell \ls n - 1 $. For any $ B' \in \binom{[n]}{\ell} $ with $ x \in B' $, there is some $ B \in \binom{[n]}{\ell_0} $ with $ x \in B \subseteq B' $. By Corollary \ref{cor32}, we have

\[ \sB_{\sA}(\ell) = \cb{B' \in \binom{[n]}{\ell}: x \in B'}, \]

\noindent that is, $ \sB_{\sA}(\ell) $ is a trivial maximum intersecting family for $ \ell_0 \ls \ell \ls n - 1 $ by the Erd\H{o}s-Ko-Rado Theorem. Thus $ \abs{\sB_{\sA}(\ell)} = \abs{\sU(\ell)} $, and so $ \abs{\sD_{\sA}(\ell)} = \abs{\sD_{\sA}(n - \ell)} $.		
\end{proof}

\begin{cor}\label{cor} Let $ \sA \subseteq \binom{[n]_m}{k} $
be a maximal intersecting family.	
If $ \abs{\sD_{\sA}(q)} = \abs{\sD_{\sA}(n - q)} $, then $ \varphi^{-1}(\sB_{\sA}) \cong \varphi^{-1}(\sU) $.	

\end{cor}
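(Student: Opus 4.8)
The plan is to apply Lemma \ref{lemma_the_l} at the smallest admissible cardinality $ \ell_0 = q $ and then convert the resulting ``star'' structure on $ \sB_{\sA} $ into an isomorphism of the multiset families. First I would check that $ \ell_0 = q $ is admissible, i.e. that $ q \ls \floor{\frac{n - 1}{2}} $. Since $ k \gs 2 $ and $ m \gs 2 $ force $ q = \ceil{k/m} \ls \ceil{k/2} < k $, we have $ n \gs k + q \gs 2q + 1 $, so $ \floor{\frac{n - 1}{2}} \gs q $ and in particular $ n > 2q $. The hypothesis $ \abs{\sD_{\sA}(q)} = \abs{\sD_{\sA}(n - q)} $ is then exactly the hypothesis of Lemma \ref{lemma_the_l} taken at $ \ell_0 = q $.

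Next I would extract the precise structure from (the proof of) Lemma \ref{lemma_the_l}. The equality $ \abs{\sD_{\sA}(q)} = \abs{\sD_{\sA}(n - q)} $ gives $ \abs{\sB_{\sA}(q)} = \abs{\sU(q)} $, so $ \sB_{\sA}(q) $ is a maximum intersecting family in $ \binom{[n]}{q} $; since $ n > 2q $, the Erd\H{o}s-Ko-Rado Theorem forces $ \sB_{\sA}(q) = \cb{B \in \binom{[n]}{q}: x \in B} $ for some fixed $ x \in [n] $. Because $ \sB_{\sA} $ is maximal intersecting it is upward closed by Corollary \ref{cor32}, and combining this with $ \abs{\sB_{\sA}(\ell)} = \abs{\sU(\ell)} $ for all $ q \ls \ell \ls n - 1 $ (which is what Lemma \ref{lemma_the_l} delivers at $ \ell_0 = q $) yields $ \sB_{\sA}(\ell) = \cb{B \in \binom{[n]}{\ell}: x \in B} $ for every $ \ell $ with $ q \ls \ell \ls n - 1 $, all with the same center $ x $.

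Finally I would translate this back to multisets. For any $ A \in \binom{[n]_m}{k} $ the support $ \varphi(A) = A \cap [n] $ has size $ \ell $ with $ q \ls \ell \ls k \ls n - 1 $, and by the previous step $ \varphi(A) \in \sB_{\sA} $ if and only if $ x \in \varphi(A) $, i.e. if and only if $ x \in A $. Hence $ \varphi^{-1}(\sB_{\sA}) = \cb{A \in \binom{[n]_m}{k}: x \in A} $. Letting $ \sigma $ be the transposition of $ [n] $ swapping $ 1 $ and $ x $ (the identity if $ x = 1 $), we obtain $ \sigma\pt{\varphi^{-1}(\sB_{\sA})} = \cb{A \in \binom{[n]_m}{k}: 1 \in A} = \sF_{0, 1}^{(m)} = \varphi^{-1}(\sU) $, so $ \varphi^{-1}(\sB_{\sA}) \cong \varphi^{-1}(\sU) $. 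The only genuinely substantive points are the admissibility check $ q \ls \floor{\frac{n - 1}{2}} $ and the observation that the support sizes of $ k $-multisets all lie in the range $ [q, n - 1] $ on which the common-center star structure is available; the remainder is bookkeeping, so I expect no serious obstacle here.
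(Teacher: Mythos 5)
Your proposal is correct and follows essentially the same route as the paper: apply Lemma \ref{lemma_the_l} at $\ell_0=q$ and use the common-center star structure (via the Erd\H{o}s--Ko--Rado Theorem and Corollary \ref{cor32}) to identify $\varphi^{-1}(\sB_{\sA})$ with a star, hence with $\varphi^{-1}(\sU)$ up to a transposition. You merely make explicit two points the paper leaves implicit, namely the admissibility check $q\ls\floor{\frac{n-1}{2}}$ and the fact that all levels share the same center $x$, which is exactly what justifies the paper's one-line claim that $\bigcup_{\ell}\sB_{\sA}(\ell)\cong\bigcup_{\ell}\sU(\ell)$.
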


\begin{proof}
	
If $ \abs{\sD_{\sA}(q)} = \abs{\sD_{\sA}(n - q)} $, then by Lemma \ref{lemma_the_l}, we have $ \abs{\sD_{\sA}(\ell)} = \abs{\sD_{\sA}(n - \ell)} $ for $ q \ls \ell \ls n - 1 $. Moreover, we have

\[\bigcup_{\ell = q}^{k} \sB_{\sA}(\ell) \cong \bigcup_{\ell = q}^{k} \sU(\ell).\]

\noindent Thus $ \varphi^{-1}(\sB_{\sA}) \cong \varphi^{-1}(\sU) $.\qedhere

\end{proof}

\noindent{\bf Proof of Theorem \ref{main_thm}} If $m=1$, Theorem \ref{main_thm} is the famous Erd\H{o}s-Ko-Rado Theorem. So we will assume $m\gs 2$ in the following proof. Let $ \sA \subseteq \binom{[n]_m}{k} $
be a maximal intersecting family. By Lemmas \ref{lem310} and \ref{lem312}, we have $ \abs{\sA}= \abs{\varphi^{-1}(\sB_{\sA})}\ls \abs{\varphi^{-1}(\sU)}=\abs{\sF_{0, 1}^{(m)}} $.

Now we consider the cases {\rm(a)} $ n > k + q $, or {\rm(b)} $ n = k + q $, $ k > m $ and $ m \nmid k $ and assume $ \abs{\sA} = \abs{\sF_{0, 1}^{(m)}}$. We will show that  $ \sA \cong \sF_{0, 1}^{(m)} $. If $ \abs{\sD_{\sA}(q)} = \abs{\sD_{\sA}(n - q)} $, then the result holds by Corollary \ref{cor}. Suppose $ \abs{\sD_{\sA}(q)} \ne \abs{\sD_{\sA}(n - q)} $.
 Take
\[ L := \max\cb{\ell \in \BZ: q \ls \ell \ls \floor{\frac{n - 1}{2}} \text{ and } \abs{\sD_{\sA}(\ell)} - \abs{\sD_{\sA}(n - \ell)} > 0}. \]

\noindent By Lemma \ref{lemma_the_l}, we have $ \abs{\sD_{\sA}(\ell)} > \abs{\sD_{\sA}(n - \ell)} $ for $ q \ls \ell \ls L $. Hence

\begin{equation}\label{eqn_with_L}
\abs{\varphi^{-1}(\sU)} - \abs{\varphi^{-1}(\sB_{\sA})} = \sum_{\ell = q}^{L}  \pt{C_{k, \ell} - C_{k, n - \ell}} \cdot \pt{\abs{\sD_{\sA}(\ell)} - \abs{\sD_{\sA}(n - \ell)}}.
\end{equation}

\noindent Since $ \abs{\sA} = \abs{\sF_{0, 1}^{(m)}}$ (that is $\abs{\varphi^{-1}(\sB_{\sA})}=\abs{\varphi^{-1}(\sU)} $), we have $ C_{k, \ell} = C_{k, n - \ell} $ for $ q \ls \ell \ls L $. In particular, $ C_{k, q} = C_{k, n - q} $. By Lemma \ref{easy_lemma} (\ref{easy_property1}), we have $ n - q \ls k $ which implies $ n = k + q $. In this case, $ C_{k, q} = C_{k, k} = 1 $ by Lemma \ref{easy_lemma} (\ref{easy_property3}). Then we have $ q = 1 $, or { $ q > 1 $ and $ m \mid k $ by Lemma \ref{easy_lemma} (\ref{easy_property2}), a contradiction with our conditions. Thus, when {\rm(a)} $ n > k + q $, or {\rm(b)} $ n = k + q $, $ k > m $ and $ m \nmid k $, $ \abs{\sA} = \abs{\sF_{0, 1}^{(m)}} $ if and only if $ \sA \cong \sF_{0, 1}^{(m)} $. Hence we complete the proof. \QEDopen

\vskip.4cm

\begin{rem} {\em When $ n = k + q=k+\ceil{k/m} $, and $ k \ls m $ or  $ m \mid k $, 
 families attaining the maximum size are not limited to those isomorphic
 to $\sF_{0, 1}^{(m)}$. This result holds obviously when $m=1$. Now we  construct a maximal intersecting family $ \sN \subseteq \sP([n]) $ such that $ \abs{\varphi^{-1}(\sN)} = \abs{\varphi^{-1}(\sU)} = \abs{\sF_{0, 1}^{(m)}} $ but $ \varphi^{-1}(\sN) $ is non-trivial for $m\ge 2$. By Lemma \ref{easy_lemma} (\ref{easy_property2}) and (\ref{easy_property3}),  we have $ C_{k, q} = C_{k, k} = 1 $ in this case. Let $ \sN := (\sU \setminus \sD_{\varphi^{-1}(\sN)}) \cup \sD_{\varphi^{-1}(\sN)}^c $, where $\sD_{\varphi^{-1}(\sN)}:= 2^{[q]} \cap \sU $. Then 
  $ \sU \setminus \sN = \sD_{\varphi^{-1}(\sN)} $ and $ \abs{\sD_{\varphi^{-1}(\sN)}(s)} \ne 0 $ if and only if $ 1 \ls s \ls q $. Since $m\gs 2$, we have $k>q$. Then $\abs{\sD_{\varphi^{-1}(\sN)}(q)}-\abs{\sD_{\varphi^{-1}(\sN)}(k)}=\abs{\sD_{\varphi^{-1}(\sN)}(q)}\ne 0 $. If $ q < \ell \ls \floor{\frac{n - 1}{2}} $, we have $\abs{\sD_{\varphi^{-1}(\sN)}(\ell)}=0$ and $\abs{\sD_{\varphi^{-1}(\sN)}(n-\ell)}=0$ by $n-\ell>\ell>q$.   
  Thus for  $ \ell $ with $ q \ls \ell \ls \floor{\frac{n - 1}{2}} $, we have $ \abs{\sD_{\varphi^{-1}(\sN)}(\ell)} - \abs{\sD_{\varphi^{-1}(\sN)}(n - \ell)} = \abs{\sD_{\varphi^{-1}(\sN)}(\ell)} - \abs{\sD_{\varphi^{-1}(\sN)}(k + q - \ell)} \ne 0 $ if and only if $ \ell = q $. Hence by (\ref{eqn_with_L}) we have
\[\abs{\varphi^{-1}(\sU)} - \abs{\varphi^{-1}(\sN)} = \pt{C_{k, q} - C_{k, k}} \cdot \pt{\abs{\sD_{\varphi^{-1}(\sN)}(q)} - \abs{\sD_{\varphi^{-1}(\sN)}(k)}} = 0.\]

 Next we claim that $ \sN \subseteq \sP([n]) $ is intersecting. Let $ G_1, G_2 \in \sN$. We can assume $\abs{\{G_1,G_2\}\cap \sU}\ls 1$.

\begin{enumerate}
	\item If $ G_1, G_2 \in \sD_{\varphi^{-1}(\sN)}^c $, then we have $ G_1^c, G_2^c \in \sD_{\varphi^{-1}(\sN)} $. So $ G_1^c \cup G_2^c \subseteq [q] \subsetneqq [n] $. Thus $ G_1 \cap G_2 = (G_1^c \cup G_2^c)^c \ne \emptyset $.
	
	\item Assume $ G_1 \in \sU \setminus \sD_{\varphi^{-1}(\sN)} $ and $ G_2 \in \sD_{\varphi^{-1}(\sN)}^c $. If $ G_1 \cap G_2 = \emptyset $, then $ G_1 \subseteq G_2 ^c$, which implies $ G_1 \in \sD_{\varphi^{-1}(\sN)} $, a contradiction.
\end{enumerate}

\noindent Hence $ \sN $ is intersecting. In particular, $ \varphi^{-1}(\sN) \subseteq \binom{[n]_m}{k} $ is intersecting. Next, we claim that $ \varphi^{-1}(\sN) $ is non-trivial. Note that $ [q]^c = \cb{q + 1, \ldots, n} \in \varphi^{-1}(\sN) $. For any integer $ i $ with $ q + 1 \ls i \ls n $, we have

\[\cb{1} \cup [q + 1, n] \setminus \{i\} \in \varphi^{-1}(\sN).\]

\noindent Thus $ \varphi^{-1}(\sN) $ is non-trivial.}
\end{rem}

\section*{Acknowledgement}
This research was supported by   the National Natural Science Foundation of China (Grant 12171272 \&
12161141003).

\addcontentsline{toc}{chapter}{Bibliography}


\begin{thebibliography}{99}
	
\bibitem{AK97}
R. Ahlswede, L. H. Khachatrian, The complete intersection theorem for systems of finite sets, European J. Combin. (2) 18 (1997) 125--136.

\bibitem{AK96}
R. Ahlswede, L. H. Khachatrian, The complete nontrivial-intersection theorem for systems of finite sets, J. Combin. Theory Ser. A  (1) 76 (1996) 121--138.

\bibitem{CLLW22}
M. Cao, M. Lu, B. Lv, K. Wang, Some intersection theorems for finite sets, https://doi.org/10.48550/arXiv.2205.10789.
	
\bibitem{EKR61}
P. Erd\H{o}s, C. Ko, R. Rado, Intersection theorems for systems of finite sets, Quart. J. Math. Oxford Ser. (2) 12 (1961) 313--320.

\bibitem{F78}
P. Frankl, On intersecting families of finite sets, J. Combin. Theory Ser. A 24 (2) (1978) 146--161.

\bibitem{Frankl-1978}
P. Frankl, The Erd\H{o}s-Ko-Rado theorem is true for $n = ckt$, in: Combinatorics, Vol. I, Proc. Fifth Hungarian Colloq., Keszthey, 1976, in: Colloq. Math. Soc. J\'{a}nos Bolyai, vol. 18, North-Holland, 1978, pp. 365--375.

\bibitem{Frankl--Furedi-1991}
P. Frankl and Z. F\"{u}redi, Beyond the Erd\H{o}s-Ko-Rado theorem, \emph{J. Combin. Theory Ser. A} 56 (1991) 182--194.

\bibitem{FGV15}
Z. F\"{u}redi, D. Gerbner, M. Vizer, A discrete isodiametric result: the Erd\H{o}s-Ko-Rado theorem for multisets, European J. Combin. 48 (2015) 224--233.

\bibitem{GM16}
C. Godsil, K. Meagher, Erd\H{o}s-Ko-Rado theorems: algebraic approaches. Cambridge Studies in Advanced Mathematics, 149. Cambridge University Press, Cambridge, 2016. xvi+335 pp.

\bibitem{HM67}
A.J.W. Hilton, E.C. Milner, Some intersection theorems for systems of finite sets, Quart. J. Math. Oxford Ser. (2) 18 (1967) 369--384.
	
\bibitem{MP11}
K. Meagher, A. Purdy, An Erd\H{o}s-Ko-Rado theorem for multisets, Electron. J. Combin. 18 (1) (2011) Paper 220, 8 pp.
	
\bibitem{MP16}
K. Meagher, A. Purdy, Intersection theorems for multisets, European J. Combin. 52 (2016), part A, 120--135.

\bibitem{Wilson-1984}
R.M. Wilson, The exact bound in the Erd\H{o}s-Ko-Rado theorem, Combinatorica 4 (1984) 247--257.
\end{thebibliography}
\end{document}